\theoremstyle{definition}
\newtheorem{theorem}{Theorem}[section]
\newtheorem{definition}[theorem]{Definition}
\newtheorem{lemma}[theorem]{Lemma}
\newtheorem{remark}[theorem]{Remark}
\newcounter{dummy}
\newcommand\myitem[1][]{\item\refstepcounter{dummy}\def\@currentlabel{#1}}
\newcommand{\inspect}[1]{%
  \def\inspectspace{\mskip3mu\relax}
  \sbox\z@{$#1$}
  \sbox\tw@{\thickmuskip=0mu$#1$}
  \ifdim\wd\tw@<\wd\z@ \def\inspectspace{\mskip9mu\relax}\fi 
}
\DeclareMathOperator{\Existsa}{\exists}
\newcommand{\Exists}[1]{
  \inspect{#1}
  \Existsa #1 \inspectspace
}
\DeclareMathOperator{\Foralla}{\forall}
\newcommand{\Forall}[1]{
  \inspect{#1}
  \Foralla #1 \inspectspace
}
\newcommand{\Lbf}{\mathcal{L}_{\Box, \forall}}
\newcommand{\F}{\mathcal{F}}
\newcommand{\M}{\mathcal{M}}
\newcommand{\la}{\langle}
\newcommand{\ra}{\rangle}
\newcommand{\N}{\mathbb{N}}
\newcommand{\PA}{\mathsf{PA}}
\newcommand{\HA}{\mathsf{HA}}
\newcommand{\gl}{\mathsf{GL}}
\newcommand{\GLP}{\mathsf{GLP}}
\newcommand{\RC}{\mathsf{RC}}
\newcommand{\QRC}{\mathsf{QRC_1}}
\newcommand{\QGL}{\mathsf{QGL}}
\newcommand{\Kf}{\mathsf{K}4}
\newcommand{\QKf}{\mathsf{QK}4}
\renewcommand{\S}{\mathsf{S}}
\newcommand{\QS}{\mathsf{QS}}
\newcommand{\isig}[1]{{\ensuremath {\mathrm{I}\Sigma_{#1}}}\xspace}
\newcommand{\PL}{\mathsf{PL}}
\newcommand{\TPL}{\mathsf{TPL}}
\newcommand{\QPL}{\mathsf{QPL}}
\newcommand{\TQPL}{\mathsf{TQPL}}
\newcommand{\QRCt}{\mathcal{QRC}_1}
\newcommand{\Ax}{\normalfont{\text{Ax}}}
\newcommand{\Prf}{\normalfont{\text{Prf}}}
\newcommand{\fv}{\normalfont{\text{fv}}} 
\newcommand{\mdepth}{\normalfont{\text{d}}_\Diamond} 
\newcommand{\udepth}{\normalfont{\text{d}}_\forall} 
\newcommand{\cdepth}{\normalfont{\text{d}}_{\normalfont{\text{const}}}} 
\newcommand{\Cl}{\mathcal{C}\ell} 
\renewcommand{\vec}[1]{\bm{#1}}
\newcommand{\gnum}[1]{\ulcorner #1 \urcorner} 
\newcommand{\xaltern}[1]{\sim_{#1}} 
\newcommand{\subst}[2]{[#1 {\leftarrow} #2]} 
\newcommand{\pair}[2]{\la #1, #2 \ra}
\newcommand{\umod}{\  \normalfont{\text{mod}} \ }
\newcommand{\fs}{^{*}} 
\newcommand{\fsT}{^{*_{T}}} 
\newcommand{\fsPA}{^{*_{\PA}}} 
\newcommand{\ofs}{^{\circledast}} 
\newcommand{\ofsT}{^{\circledast_{T}}} 
\newcommand{\ofsPA}{^{\circledast_{\PA}}} 
\newcommand{\is}{^{\circ}} 
\newcommand{\isT}{^{\circ_{\tau}}} 
\newcommand{\isHA}{^{\circ_{\eta}}} 
\newcommand{\ois}{^{\circledcirc}} 
\newcommand{\oisT}{^{\circledcirc_{\tau}}} 
\newcommand{\bij}[1]{\lcorners #1 \rcorners} 
  \title{An Escape from Vardanyan's Theorem}
  \author{Ana de Almeida Borges\thanks{\url{anadealmeidagabriel@ub.edu}} \and Joost J. Joosten\thanks{\url{jjoosten@ub.edu}}}
  \date{Universitat de Barcelona}
\begin{document}

\maketitle

  \begin{abstract}
    Vardanyan's Theorems \cite{Vardanyan1986, Vardanyan1988} state that $\QPL(\PA)$ -- the quantified provability logic of Peano Arithmetic -- is $\Pi^0_2$ complete, and in particular that this already holds when the language is restricted to a single unary predicate. Moreover, Visser and de Jonge \cite{VisserAndDeJonge:2006:NoEscape} generalized this result to conclude that it is impossible to computably axiomatize the quantified provability logic of a wide class of theories.
    However, the proof of this fact cannot be performed in a strictly positive signature.
    The system $\QRC$ was previously introduced by the authors \cite{QRC1} as a candidate first-order provability logic.
    Here we generalize the previously available Kripke soundness and completeness proofs, obtaining constant domain completeness.
    Then we show that $\QRC$ is indeed complete with respect to arithmetical semantics. This is achieved via a Solovay-type construction applied to constant domain Kripke models.
    As corollaries, we see that $\QRC$ is the strictly positive fragment of $\QGL$ and a fragment of $\QPL(\PA)$.
  \end{abstract}

  \textbf{Keywords:}
    Modal logic,
    provability logic,
    strictly positive logics,
    quantified modal logic,
    arithmetic interpretations,
    feasible fragments.

\section{Introduction}

Provability is a fundamental concept in mathematics, logic and philosophy alike. Gödel proved his famous incompleteness results in \cite{Godel1931} by formalizing provability. Thus, for formal theories like Peano Arithmetic ($\PA$), we have a natural arithmetical predicate $\Box_\PA(\cdot)$ that is true exactly for the Gödel numbers of $\PA$-provable sentences:
\begin{equation}\label{equation:definitionOfProvabilityPredicate}
  \PA \vdash A \iff \mathbb N \vDash \Box_\PA (\gnum{A}).
\end{equation}
For readability we shall not distinguish formulas from their Gödel numbers or from syntactic terms (numerals) denoting these Gödel numbers in the future.

Gödel observed various provable structural properties of the provability predicate. For example, for any formula $A$, if $\PA\vdash A$ then $\PA\vdash \Box_\PA A$. Moreover, this can be formalized itself: for any formula $A$, $\PA \vdash \Box_\PA A \to \Box_\PA \Box_\PA A$. Using such properties, and after showing how self-reference can be obtained in $\PA$, Gödel derived his first incompleteness theorem (here presented in a slightly weakened form for $\PA$) by observing that the sentence $B$ such that $\PA \vdash B \leftrightarrow \neg \Box_\PA B$ can neither be proved nor refuted in $\PA$, provided that $\PA$ only proves true theorems.

In light of this, it makes sense to design a system that collects all provable structural properties of the provability predicate. The language $\mathcal L_\Box$ of propositional modal logic is optimally suited for this purpose. The formulas of this language are as the ones of propositional logic together with a unary modality $\Box$ that is syntactically treated as negation. Thus we can write ${\sf Form}_\Box ::= \bot \mid {\sf Prop} \mid {\sf Form}_\Box \to {\sf Form}_\Box \mid \Box {\sf Form}_\Box$, where $\mathsf{Prop}$ represents propositional symbols.

The modal logical formulas are linked to arithmetic using so-called \emph{realizations}. A particular realization $\cdot\fs$ maps propositional variables to sentences in the language of $\PA$ and this map is extended to any formula by stipulating that it commutes with implication, $\bot$ is mapped to $(0=1)$, and the modal operator is mapped to formalized provability, i.e.~$(\Box A)\fsPA = \Box_\PA A\fsPA$.

We can now express what it means to be a provable structural property. For example, for any realization $\cdot\fs$ we have
$\PA \vdash (\Box (p \to q) \to (\Box p \to \Box q))\fsPA$, since $\PA \vdash  \Box_\PA (p\fs \to q\fs) \to (\Box_\PA p\fs \to \Box_\PA q\fs)$ for any formulas $p\fs$ and $q\fs$. The structural properties are thus captured by the modal formulas that are provable under any realization. We call this $\PL(\PA)$, the \emph{provability logic} of $\PA$, and write
\begin{equation*}
\PL(\PA) := \{ A \in \mathcal L_\Box \mid \text{for any }\cdot\fs, \text{ we have } \PA \vdash A\fsPA \}.
\end{equation*}
Via \eqref{equation:definitionOfProvabilityPredicate} we know that ($\PA \vdash A\fsPA$ for any $\cdot\fs$) if and only if ($\N \vDash \Box_\PA A\fsPA$ for any $\cdot\fs$). Clearly $A\fsPA$ only depends on the value of $\cdot\fs$ for the finitely many propositional variables that occur in $A$, and so the universal quantifier ``for any $\cdot\fs$'' can be coded and made internal, making it possible to characterize $\PL(\PA)$ by $\{ A \mid \mathbb N \vDash \Forall{\cdot\fs} \Box_\PA A\fsPA\}$.
We now observe that $\Box_\PA B$ is a $\Sigma^0_1$ formula. That is, $\Box_\PA B$ is of the form $\Exists{p} \Prf_\PA(p, B)$, where $\Prf_\PA(\cdot, \cdot)$ is a decidable predicate. Thus, $\PL(\PA)= \{ A \mid \mathbb N \vDash \Forall{(\cdot)\fs} \Box_\PA A\fsPA\}$ has a $\Pi^0_2$ definition. If we moreover realize that $\Box_\PA (\cdot)$ is $\Sigma^0_1$ complete in that any computably enumerable\footnote{Also known as c.e.,~recursively enumerable, or computably axiomatizable.} set $U$ can be defined using a formula $A_U$ as $\{ n \mid \mathbb N \vDash \Box_\PA (A_U(n))\}$, then it seems that there is little hope that the $\Pi^0_2$ defined set $\PL(\PA)$ allows for a simple characterization. 

However, a little miracle happens and by Solovay's completeness theorem \cite{Solovay:1976} we know that $\PL(\PA)$ is decidable, corresponding to what we nowadays call the \emph{Gödel-Löb provability logic} $\gl$. Likewise, Solovay proved that the set of \emph{true} structural provability principles 
\begin{equation*}
\TPL(\PA) := \{ \varphi \in \mathcal L_\Box \mid \text{for any }\cdot\fs, \text{ we have } \mathbb N \vDash \varphi\fsPA \}
\end{equation*}
is also decidable and described by the well-behaved modal logic $\mathsf{GLS}$. We observe that, \emph{a priori}, $\TPL(\PA)$ falls outside the arithmetical hierarchy due to Tarski's \cite{Tarski1936} result on the undefinability of arithmetical truth.

After these positive results, there was hope that the nice characterizations could be extended to the realm of quantified modal logic. 
Thus, the focus switched to the language $\Lbf$ of relational quantified modal logic without identity, which contains
$\bot$, relation symbols, Boolean connectives, $\forall x$, and $\Box$, as well as the usual abbreviations such as $\Diamond$ and $\exists x$. We define arithmetical realizations $\cdot\fs$ for $\Lbf$ formulas as before with the only difference that we now map $n$-ary relation symbols to arithmetical formulas with $n$ free variables and set $(\Forall{x} \varphi)\fsPA:= \Forall{y} \varphi\fsPA$, where $y$ is the arithmetical variable corresponding to $x$.
The \emph{quantified provability logic} of $\PA$ can now be defined by analogy (cf.~Boolos \cite{Boolos:1993:LogicOfProvability}):%
  \footnote{In general, $\QPL(T)$ may change depending on the chosen axiomatization $\tau$ for $T$ (see \cite{Artemov1986, Kurahashi2013, Kurahashi2021}), but here we omit the axiomatization for simplicity.}
\begin{equation*}
  \QPL(\PA) := \{
    A \in \Lbf
    \mid
    \text{for any }\cdot\fs, \text{ we have } \PA \vdash A\fsPA
  \}
  ,
\end{equation*}
as well as 
\begin{equation*}
  \TQPL(\PA) := \{
    A \in \Lbf
    \mid
    \text{for any }\cdot\fs, \text{ we have } \mathbb N \vDash A\fsPA
  \}
  .
\end{equation*}

It is not hard to see that $\Box \Forall{x} A \to \Forall{x} \Box A$ is in $\QPL(\PA)$, and for some time it was believed that one could obtain $\QPL(\PA)$ by adding such principles together with predicate logical reasoning to $\gl$. A first wrinkle in this hope appeared when Montagna \cite{Montagna1984} published a new always provable principle falling outside this easily generated set. Soon after, Artemov \cite{Artemov1985} proved that $\TQPL(\PA)$ is not even arithmetically definable. Last hopes were scattered when Vardanyan \cite{Vardanyan1986} and McGee \cite{McGee1985} independently proved that $\QPL(\PA)$ is as complex as it can possibly be: $\Pi^0_2$ complete. Moreover, Boolos and McGee \cite{BoolosMcGee1987} showed that $\TQPL(\PA)$ is also as complex as possible: $\Pi^0_1$ complete in the set of true arithmetic.

These negative results are quite robust in various ways.
  For example, Vardanyan \cite{Vardanyan1988} showed that restricting the language to the fragment with a single unary predicate and without any modality nesting doesn't break the $\Pi^0_2$ completeness.

 Restricting the complexity of the realizations to, for example, $\Sigma^0_1$ formulas doesn't help either: the corresponding set is still $\Pi^0_2$ complete, as shown by Berarducci \cite{Berarducci1989}.

  One can consider $\QPL(T)$ for an arbitrary c.e.~theory $T$.
  It is easy to see that the degenerate case of $\QPL(\PA + \Box_\PA \bot)$ is axiomatized by predicate logic together with $\Box\bot$.%
  \footnote{Given a quantified modal formula $\varphi$, let $\widetilde{\varphi}$ be $\varphi$ with every subformula of the form $\Box \psi$ replaced by $\top$. Let $\cdot\fs$ be any realization. Then $\PA + \Box_\PA \bot \vdash \varphi\fs$ if and only if $\PA \vdash \widetilde{\varphi}\fs$ (by induction on $\varphi$). Similarly, $\mathsf{FOL} + \Box \bot \vdash \varphi$ if and only if $\mathsf{FOL} \vdash \widetilde{\varphi}$.
}
  However, Visser and de Jonge \cite{VisserAndDeJonge:2006:NoEscape} proved that for most theories $T$, $\QPL(T)$ is indeed $\Pi^0_2$ complete.
  The title of their paper is \emph{No Escape from Vardanyan's Theorem}.

  Be that as it may, there have already been some ``escapes" from Vardanyan's Theorem.
  For example, the one-variable fragment of $\QPL(\PA)$ is decidable, as shown by Artemov and Japaridze \cite{ArtemovJaparidze1990}.
  Furthermore, an arithmetically complete quantified modal logic was proposed by Yavorsky \cite{Yavorsky2002} (see also \cite{HaoTourlakis2021}). This logic, called $\QGL^b$, assumes that in $\Box A$ every free variable of $A$ is bound under the box. In other words, $\QGL^b$ is roughly $\QGL$ extended with the axiom schema $\Box A \to \Box \Forall{x} A$. 
  
  Our proposed fragment includes countably many variables and allows for open variables under the box. In order to ``escape", we restrict the language to its strictly positive fragment instead. This work already began in \cite{QRC1}, where we described the system $\QRC$ and proved its decidability. Here we improve on the modal results presented there and prove the arithmetical completeness theorem for $\QRC$, previously left as a conjecture.

  There is an ongoing formalization\footnote{\url{https://gitlab.com/ana-borges/QRC1-Coq}} of this paper in the Coq Proof Assistant \cite{coq}, covering Sections~\ref{sec:QRC} and \ref{sec:Kripke_semantics} (and part of Section~\ref{sec:constant_domain}) as of August 2021.

\subsection{Overview of the paper}

We start with a brief overview of strictly positive logics in Section~\ref{sec:strictly_positive}, followed by the definition of $\QRC$, our main object of study, in Section~\ref{sec:QRC}.

After that the paper is divided into two parts, the first dealing with purely modal results (Sections~\ref{sec:Kripke_semantics} to \ref{sec:QGL}), and the second delving into arithmetic (Sections~\ref{sec:arithmetics} to \ref{sec:HA}). The two parts are not completely independent, but a reader who wishes to skip to Section~\ref{sec:arithmetics} need only be familiar with the definition of Kripke model with constant domain (Definition~\ref{def:Kripke_model}) and the constant domain completeness theorem for $\QRC$ (Theorem~\ref{thm:constant_domain}).

In the first part, Section~\ref{sec:Kripke_semantics} starts by presenting an extended definition of Kripke model that does not depend on inclusive models, with a respective soundness proof for $\QRC$.
Then in Section~\ref{sec:constant_domain} we show that $\QRC$ is complete with respect to constant domain Kripke models.
Section~\ref{sec:QGL} remarks that $\QRC$ is the strictly positive fragment of every logic between $\QKf$ and $\QGL$, and also of other quantified modal logics such as $\QKf + \mathsf{BF}$, where $\mathsf{BF} = \Forall{x} \Box A \to \Box \Forall{x} A$ is the well-known and arithmetically unsound Barcan Formula.

In the second part, Section~\ref{sec:arithmetics} explains our arithmetic reading of strictly positive formulas, followed by Section~\ref{sec:arithmetical_completeness}, where we prove the arithmetical completeness of $\QRC$.
Section~\ref{sec:QPL} then makes use of the arithmetical completeness theorem to show that $\QRC$ is a fragment of $\QPL(\PA)$.
Section~\ref{sec:HA}, which can be read right after Section~\ref{sec:arithmetics}, shows the arithmetical soundness of $\QRC$ with respect to Heyting Arithmetic.

We end with some proposed avenues for future work in Section~\ref{sec:future}.

\section{Strictly positive logics}
\label{sec:strictly_positive}

A key feature of our escape is given by restricting the language. Given variables $x, x_i, \hdots$ and a signature $\Sigma$ fixing the constants $c, c_i, \hdots$ and relation symbols $S, S_i, \hdots$, the formulas of $\QRC$ are built up from $\top$, $n$-ary relation symbols applied to $n$ terms (which are either variables or constants), the binary $\land$, the unary $\Diamond$ and the quantifier $\forall x$. The provable judgments in $\QRC$ are all of the form $\varphi \vdash \psi$ with $\varphi$ and $\psi$ in the above language.

Our formulas are related to arithmetic through realizations $\cdot\is$ that map $n$-ary predicate symbols to c.e.~axiomatizations of theories indexed by $n$ parameters (details can be found in Section~\ref{sec:arithmetics}). Given a $\Sigma^0_1$ axiomatization $\tau$ of some theory $T$, we extend $\cdot\is$ to non-predicate formulas such that conjunctions are interpreted as the unions of the corresponding axiom sets ($(\varphi \land \psi)\isT := \varphi\isT \lor \psi\isT$), and universal quantification corresponds to an infinite union ($(\Forall{x} \varphi)\isT := \Exists{y} \varphi\isT$, where $y$ is the arithmetical variable corresponding to the modal variable $x$).

This interpretation is common in the study of reflection calculi \cite{Beklemishev2014} and is more general than the usual, finitary one, mentioned in the previous section as $\cdot\fs$. In the latter case, conjunctions of modal formulas are simply interpreted as conjunctions of their arithmetical counterparts. We further define and make use of the finitary notion in Section~\ref{sec:arithmetical_completeness}.
While it is possible to represent finite extensions of a base theory through the  finitary $\cdot\fs$-style realizations, the $\cdot\is$ approach allows for the possibility of infinitary extensions.

Using this restricted fragment and infinitary arithmetical interpretation, we define the \emph{Strictly Positive Quantified Provability Logic} of a theory $T$ as follows. 
\begin{equation*}
  \QPL^{\sf SP}(T) := \{ \la \varphi , \psi \ra \mid \Forall{\cdot\is} T \vdash \Forall{\theta} (\Box_{\psi\isT} \theta \to \Box_{\varphi\isT}\theta)\}.
\end{equation*}
The main result of this paper is that, for a large class of theories $T$, the logic $\QPL^{\sf SP}(T)$ is decidable and given by the system $\QRC$ as introduced in \cite{QRC1}. As such our paper was inspired by and contributes to three recent developments in the literature: strictly positive logics, reflection calculi, and polymodal logics.

The quintessential polymodal provability logic is $\GLP$, introduced by Japaridze in 1986 \cite{Japaridze1986}.
The Reflection Calculus, $\RC$, was first introduced by Dashkov \cite{Dashkov2012} as the set of $\GLP$-equivalences between strictly positive formulas. It was then axiomatized by Beklemishev \cite{Beklemishev2012}, and it is the latter formulation that appears in most of the ensuing literature.

  Even though $\RC$ has a strictly positive language, it is remarkably expressive, giving rise to an ordinal notation system \cite{FernandezDuque2017} and being an appropriate tool for $\Pi^0_1$ ordinal analysis \cite{BeklemishevPakhomov:2019:GLPforTheoriesOfTruth}.
  This is remarkable because, while $\GLP$ is $\mathsf{PSPACE}$ complete \cite{Shapirovsky2008}, $\RC$ has a polynomial-time decision procedure.
  Thus, at least in the case of $\GLP$, restricting the language to the strictly positive fragment is very worthwhile.

  $\QRC$ was inspired by $\RC_1$, which is the unimodal fragment of $\RC$. We hoped to emulate $\RC$'s success in these two dimensions, obtaining a useful calculus with a simpler complexity than the original ($\QPL(\PA)$ in this case). We already see in this paper that the latter goal was achieved, since $\QRC$ is decidable while $\QPL(\PA)$ is $\Pi^0_2$ complete.
  However, the main expressibility tool available in $\RC$, the iterated consistency statements (also known as worms \cite{Beklemishev2006_WormPrinciple}), are not very interesting when there is only one modality. Thus we plan to extend $\QRC$ to $\mathsf{QRC}_\Lambda$ in the future.

  When considering a strictly positive logic $P$, one may ask whether there is some modal logic $L$ whose strictly positive fragment $L^{\mathsf{SP}}$ coincides with $P$ (cf.~\cite{Bek18b}). In that case we would have
\begin{equation}
\label{eq:Lsp}
  \varphi \vdash_P \psi
  \iff
  \varphi \vdash_{L^{\mathsf{SP}}} \psi
  \iff
  L \vdash \varphi \to \psi
  ,
\end{equation}
where $\varphi$ and $\psi$ are strictly positive formulas.

We know that $\RC$ is the strictly positive fragment of $\GLP$ (in the sense of \eqref{eq:Lsp}), and in Section~\ref{sec:QGL} we show that $\QRC$ is the strictly positive fragment of $\QGL$.
However, there are strictly positive logics with no such counterpart, such as $\RC$ together with the persistence axiom $\la \omega \ra \varphi \vdash \varphi$. This system was described in \cite{Beklemishev2014} as $\RC\omega$.

  The study of strictly positive logics has also been developed in other communities, most notably in the field of description logics, as in \cite{Kurucz2010, Kikot2019}.
  Furthermore, there is a significant body of work done about (non strictly) positive logics (see \cite{Dunn1995, CelaniJansana2012}).

\section{Quantified Reflection Calculus with one modality}
\label{sec:QRC}

The \emph{Quantified Reflection Calculus with one modality}, or $\QRC$, is a sequent logic in a strictly positive predicate modal language introduced in \cite{QRC1}.

The free variables of a formula $\varphi$ are defined as usual, and denoted by $\fv(\varphi)$. The expression $\varphi\subst{x}{t}$ denotes the formula $\varphi$ with all free occurrences of the variable $x$ simultaneously replaced by the term $t$. We say that $t$ is free for $x$ in $\varphi$ if no occurrence of a free variable in $t$ becomes bound in $\varphi\subst{x}{t}$.

The axioms and rules of $\QRC$ are listed in the following definition from \cite{QRC1}. Here we removed the axiom $\Diamond \Forall{x} \varphi \vdash \Forall{x} \Diamond \varphi$ because it is an easy consequence of the calculus without it.

\begin{definition}[$\QRC$, \cite{QRC1}]
  Let $\Sigma$ be a signature and $\varphi$, $\psi$, and $\chi$ be any formulas in that language. The axioms and rules of $\QRC$ are the following:

\begin{multicols}{2}
\begin{enumerate}[label=\upshape(\roman*),ref=\thetheorem.(\roman*)]
  \item $\varphi \vdash \top$ and $\varphi \vdash \varphi$;
  \item $\varphi \land \psi \vdash \varphi$ and $\varphi \land \psi \vdash \psi$;
  \item if $\varphi \vdash \psi$ and $\varphi \vdash \chi$, then\\$\varphi \vdash \psi \land \chi$;
  \item if $\varphi \vdash \psi$ and $\psi \vdash \chi$, then $\varphi \vdash \chi$;\label{rule:cut}
  \item if $\varphi \vdash \psi$, then $\Diamond \varphi \vdash \Diamond \psi$;\label{rule:necessitation}
    \item $\Diamond \Diamond \varphi \vdash \Diamond \varphi$; \label{ax:transitivity}
  \item if $\varphi \vdash \psi$, then $\varphi \vdash \Forall{x} \psi$\\($x \notin \fv(\varphi)$); \label{rule:forallR}
  \item if $\varphi\subst{x}{t} \vdash \psi$ then $\Forall{x} \varphi \vdash \psi$\\($t$ free for $x$ in $\varphi$);\label{rule:forallL}
  \item if $\varphi \vdash \psi$, then $\varphi\subst{x}{t} \vdash \psi\subst{x}{t}$\\($t$ free for $x$ in $\varphi$ and $\psi$);\label{rule:term_instantiation}
  \item if $\varphi\subst{x}{c} \vdash \psi\subst{x}{c}$, then $\varphi \vdash \psi$\\($c$ not in $\varphi$ nor $\psi$).\label{rule:constants}
\end{enumerate}
\end{multicols}

If $\varphi \vdash \psi$, we say that $\psi$ follows from $\varphi$ in $\QRC$. When the signature is not clear from the context, we write $\varphi \vdash_\Sigma \psi$ instead.
\end{definition}

We observe that our axioms do not include universal quantifier elimination. However, this and various other rules are readily available via the following easy lemma.
\begin{lemma}
\label{lem:QRC1consequences}
  The following are theorems (or derivable rules) of $\QRC$:
  \begin{enumerate}[label=\upshape(\roman*),ref=\thetheorem.(\roman*)]
    \item $\Forall{x} \Forall{y} \varphi \vdash \Forall{y} \Forall{x} \varphi$;
    \item $\Forall{x} \varphi \vdash \varphi\subst{x}{t}$ ($t$ free for $x$ in $\varphi$); \label{lem:instantiation}
    \item $\Diamond \Forall{x} \varphi \vdash \Forall{x} \Diamond \varphi$;\footnote{In \cite{QRC1} this was presented as an axiom of $\QRC$, but it is easily provable through the two $\forall$ introduction rules and necessitation.}
    \item $\Forall{x} \varphi \vdash \Forall{y} \varphi\subst{x}{y}$ ($y$ free for $x$ in $\varphi$ and $y \notin \fv(\varphi)$);
    \item if $\varphi \vdash \psi$, then $\varphi \vdash \psi\subst{x}{t}$ ($x$ not free in $\varphi$ and $t$ free for $x$ in $\psi$);
    \item if $\varphi \vdash \psi\subst{x}{c}$, then $\varphi \vdash \Forall{x} \psi$ ($x$ not free in $\varphi$ and $c$ not in $\varphi$ nor $\psi$). \label{item:constants_forall}
  \end{enumerate}
\end{lemma}

The following are two useful complexity measures on the formulas of $\QRC$.

\begin{definition}[$\mdepth$, $\udepth$, \cite{QRC1}]
  Given a formula $\varphi$, its \emph{modal depth} $\mdepth(\varphi)$ is defined inductively as follows:
  \begin{itemize}
    \item $\mdepth(\top) := \mdepth(S(x_0, \hdots, x_{n-1})) := 0$;
    \item $\mdepth(\psi \land \chi) := \max\{\mdepth(\psi), \mdepth(\chi)\}$;
    \item $\mdepth(\Forall{x} \psi) := \mdepth(\psi)$;
    \item $\mdepth(\Diamond \psi) := \mdepth(\psi) + 1$.
  \end{itemize}
  Given a finite set of formulas $\Gamma$, its modal depth is $\mdepth(\Gamma) := \max_{\varphi \in \Gamma}\{\mdepth(\varphi)\}$.

  The definition of quantifier depth $\udepth$ is analogous except for:
  \begin{itemize}
    \item $\udepth(\Forall{x} \psi) = \udepth(\psi) + 1$; and
    \item $\udepth(\Diamond \psi) = \udepth(\psi)$.
  \end{itemize}
\end{definition}

The modal depth provides a necessary condition for derivability, which in particular implies irreflexivity.

\begin{lemma}[\cite{QRC1}]
\label{lem:mdepth}

Let $\varphi$ and $\psi$ be formulas in the language of $\QRC$.
  \begin{itemize}
    \item If $\varphi \vdash \psi$, then $\mdepth(\varphi) \geq \mdepth(\psi)$.
    \item $\varphi \not\vdash \Diamond \varphi$.
  \end{itemize}
\end{lemma}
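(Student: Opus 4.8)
The plan is to prove the first item by induction on the derivation of $\varphi \vdash \psi$ in $\QRC$, and then derive the second item as an immediate consequence. Before starting the main induction I would record one auxiliary fact, proved by a straightforward sub-induction on the structure of formulas: for any formula $\chi$, variable $x$ and term $t$, we have $\mdepth(\chi\subst{x}{t}) = \mdepth(\chi)$. This holds because terms are either variables or constants, so substituting $t$ for $x$ only alters the arguments of atomic subformulas, leaving all of $\top$, $\land$, $\Forall{x}$ and $\Diamond$ structure — and hence the modal depth — untouched.

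With that in hand, the induction on $\varphi \vdash \psi$ goes through axiom-by-axiom and rule-by-rule. The base cases are the axioms: $\varphi \vdash \top$ since $\mdepth(\top) = 0$; $\varphi \vdash \varphi$ trivially; $\varphi \land \psi \vdash \varphi$ and $\varphi \land \psi \vdash \psi$ since $\mdepth(\varphi \land \psi) = \max\{\mdepth(\varphi), \mdepth(\psi)\}$; and $\Diamond\Diamond\varphi \vdash \Diamond\varphi$ since $\mdepth(\Diamond\Diamond\varphi) = \mdepth(\varphi) + 2 \geq \mdepth(\varphi) + 1 = \mdepth(\Diamond\varphi)$. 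For the rules: conjunction introduction uses that the max of two quantities each bounded by $\mdepth(\varphi)$ is still bounded by $\mdepth(\varphi)$; cut (\ref{rule:cut}) chains two inequalities; necessitation (\ref{rule:necessitation}) adds $1$ to both sides of the inductive inequality; the $\Forall{x}$-right rule (\ref{rule:forallR}) uses $\mdepth(\Forall{x}\psi) = \mdepth(\psi)$; and the three rules involving substitution — (\ref{rule:forallL}), (\ref{rule:term_instantiation}) and (\ref{rule:constants}) — are exactly where the auxiliary fact is used, together again with $\mdepth(\Forall{x}\varphi) = \mdepth(\varphi)$.

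For the second item, suppose toward a contradiction that $\varphi \vdash \Diamond\varphi$ for some formula $\varphi$. By the first item, $\mdepth(\varphi) \geq \mdepth(\Diamond\varphi) = \mdepth(\varphi) + 1$, which is impossible since $\mdepth(\varphi)$ is a natural number. Hence $\varphi \not\vdash \Diamond\varphi$.

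The argument is entirely routine; the only point requiring a little care is the substitution fact and its use in the cases for rules (\ref{rule:forallL}), (\ref{rule:term_instantiation}) and (\ref{rule:constants}), so that is the closest thing to an obstacle. (Since this lemma is cited from \cite{QRC1}, I expect the authors either to give this short induction or to simply refer to the original source.)
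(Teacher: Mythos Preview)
Your proof is correct and is exactly the routine induction one would expect; as you anticipated, the paper does not give any proof of this lemma but simply cites \cite{QRC1}.
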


Finally, the signature of $\QRC$ can be extended without strengthening the calculus.
\begin{lemma}[\cite{QRC1}]
\label{lem:signatures}
  Let $\Sigma$ be a signature and let $C$ be a collection of constants not yet occurring in $\Sigma$. By $\Sigma_C$ we denote the signature obtained by including these new constants $C$ in $\Sigma$. Let $\varphi, \psi$ be formulas in the language of $\Sigma$. Then, if $\varphi \vdash_{\Sigma_C} \psi$, so does $\varphi \vdash_\Sigma \psi$.
\end{lemma}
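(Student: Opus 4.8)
I would eliminate the new constants from a derivation by a uniform substitution of fresh variables. Fix a $\QRC$-derivation $D$ over $\Sigma_C$ witnessing $\varphi \vdash_{\Sigma_C} \psi$. Since $D$ is a finite object, only finitely many of the new constants occur in it; enumerate them as $c_1, \dots, c_n \in C$, and choose pairwise distinct variables $y_1, \dots, y_n$ that occur nowhere in $D$. Let $D'$ be obtained from $D$ by simultaneously replacing, in every formula of every sequent of $D$, each occurrence of $c_i$ by $y_i$. Because $\varphi$ and $\psi$ are formulas of $\Sigma$, none of the $c_i$ occur in them, so the endsequent of $D'$ is again $\varphi \vdash \psi$; and every symbol appearing in $D'$ belongs to $\Sigma$ or is a variable. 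It therefore suffices to check that $D'$ is a legitimate $\QRC$-derivation over $\Sigma$, i.e. that the replacement carries axiom instances to axiom instances and rule applications to rule applications (allowing a short derivation in place of a single rule application).

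Two observations make almost all cases immediate. First, since the $y_i$ are distinct from every variable occurring in $D$ and are never bound in any formula of $D'$, the constant-to-variable replacement commutes with the term substitutions $\subst{x}{t}$ appearing in rules \ref{rule:forallL}, \ref{rule:term_instantiation} and \ref{rule:constants} (including the case where $t$ is itself one of the $c_i$, which then becomes the term $y_i$). Second, for the same reason the side conditions ``$t$ free for $x$ in $\dots$'', ``$x \notin \fv(\dots)$'' and ``$c$ not in $\dots$'' are preserved: replacing a constant by a variable changes neither the quantifier structure of a formula, so it can create no new variable captures, nor the free/bound status of any variable already present, since that variable differs from every $y_i$. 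Hence applications of rules \ref{rule:cut}, \ref{rule:necessitation}, \ref{rule:forallR}, \ref{rule:forallL}, \ref{rule:term_instantiation}, the conjunction rules, the axioms of the form $\varphi \vdash \top$, $\varphi \vdash \varphi$ and the conjunction-elimination axioms, and the transitivity axiom \ref{ax:transitivity}, are all sent to applications or instances of the same rule or axiom over $\Sigma$; the same holds for an application of rule \ref{rule:constants} whose distinguished constant lies outside $C$.

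The one case requiring a small argument is an application of rule \ref{rule:constants} whose distinguished constant is some $c_j \in C$: after the replacement it becomes an inference from $\alpha\subst{x}{y_j} \vdash \beta\subst{x}{y_j}$ to $\alpha \vdash \beta$, for some $\Sigma$-formulas $\alpha, \beta$ in which $y_j$ does not occur (this is exactly the image of the side condition ``$c_j$ not in $\alpha$ nor $\beta$''). This is not an instance of rule \ref{rule:constants}, whose distinguished symbol must be a constant, but it is derivable: apply rule \ref{rule:term_instantiation} to $\alpha\subst{x}{y_j} \vdash \beta\subst{x}{y_j}$ substituting the variable $y_j$ by the term $x$; the conclusion is $(\alpha\subst{x}{y_j})\subst{y_j}{x} \vdash (\beta\subst{x}{y_j})\subst{y_j}{x}$, which equals $\alpha \vdash \beta$ because $y_j \notin \fv(\alpha) \cup \fv(\beta)$, and the side condition of rule \ref{rule:term_instantiation} holds because $y_j$ is bound nowhere. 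Organizing the above as an induction on the length of $D$ (equivalently, a stepwise rewriting of $D$ into $D'$) yields the claim. The main obstacle — indeed the only point where any thought is needed — is precisely this interaction between the constant elimination and rule \ref{rule:constants}; everything else is routine bookkeeping about substitution and capture-avoidance.
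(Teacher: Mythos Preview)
The paper does not give its own proof of this lemma: it is quoted verbatim from \cite{QRC1} and stated without argument. So there is no in-paper proof to compare against.

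Your argument is correct and is the standard one for this kind of conservativity result. The only substantive point is exactly the one you isolate, namely what happens to an application of rule~\ref{rule:constants} whose distinguished constant is one of the $c_j$, and your repair via rule~\ref{rule:term_instantiation} is sound: since $c_j$ does not occur in $\alpha$ or $\beta$ by the side condition, $y_j$ does not occur in $\sigma\alpha$ or $\sigma\beta$, and since $y_j$ is bound nowhere in $D'$ the substitution $\subst{y_j}{x}$ really does return $\sigma\alpha$ and $\sigma\beta$; the freeness condition for rule~\ref{rule:term_instantiation} holds because the occurrences of $y_j$ in $(\sigma\alpha)\subst{x}{y_j}$ sit precisely where $x$ was free. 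The remaining cases are, as you say, routine checks that your uniform replacement commutes with $\subst{x}{t}$ and preserves all side conditions, which follow from the freshness of the $y_i$.
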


\section{Relational semantics}
\label{sec:Kripke_semantics}

$\QRC$ was proven sound and complete with respect to relational semantics in \cite{QRC1}. Here we extend both of those results in the following ways: we relax the requirement for the adequateness of a frame, and we prove constant domain completeness: that if $\varphi \not\vdash \psi$ then there exists a counter model that, in addition to being finite and irreflexive, also has a constant domain.

We begin by slightly changing the definition of frame and relational model presented in \cite{QRC1}. There, models for $\QRC$ were described as a number of first-order models (the worlds) connected through a transitive relation $R$. We additionally required \emph{inclusiveness}: that whenever $w$ and $u$ are worlds connected through $R$, the domain of $w$ be included in the domain of $u$. We then used the inclusion (identity) function $\iota_{w, u}$ to refer to the element of the domain of $u$ corresponding to an element in the domain of $w$.

Here, we no longer have the inclusiveness restriction. In fact, as we will see bellow, any configuration of domains is sound as long as the functions $\eta_{w, u}$ relating the domain of $w$ with the domain of $u$ respect the transitivity of $R$. This is clearly the case if the frame is inclusive and $\eta_{w, u} = \iota_{w, u}$, so the definitions presented in \cite{QRC1} are a particular case of the ones presented here.

\begin{definition}
\label{def:Kripke_model}

  A \emph{relational model} $\M$ in a signature $\Sigma$ is a tuple $\la W, R,\allowbreak \{M_w\}_{w \in W},\allowbreak \{\eta_{w, v}\}_{wRv}, \{I_w\}_{w \in W},\allowbreak \{J_w\}_{w \in W} \ra$ where:
  \begin{itemize}
    \item $W$ is a non-empty set (the set of worlds, where individual worlds are referred to as $w, u, v$, etc);
    \item $R$ is a binary relation on $W$ (the accessibility relation);
    \item each $M_w$ is a finite set (the domain of the world $w$, whose elements are referred to as $d, d_0, d_1$, etc);
    \item if $wRv$, then $\eta_{w, v}$ is a function from $M_w$ to $M_v$;
    \item for each $w \in W$, the interpretation $I_w$ assigns an element of the domain $M_w$ to each constant $c \in \Sigma$, written $c^{I_w}$; and
    \item for each $w \in W$, the interpretation $J_w$ assigns a set of tuples $S^{J_w} \subseteq \wp((M_w)^n)$ to each $n$-ary relation symbol $S \in \Sigma$.
  \end{itemize}

  The $\la W, R, \{M_w\}_{w \in W}, \{\eta_{w, v}\}_{wRv} \ra$ part of the model is called its \emph{frame}. We say that the frame (or model) is \emph{finite} if $W$ is finite, and that it is \emph{constant domain} if all the $M_w$ coincide and all the $\eta_{w, u}$ are the identity function.
\end{definition}

The relevant frames and models will need to satisfy a number of requisites.

\begin{definition}
  A frame $\F$ is \emph{adequate} if:
  \begin{itemize}
    \item $R$ is transitive: if $wRu$ and $uRv$, then $wRv$; and
    \item the $\eta$ functions respect transitivity: if $wRu$ and $uRv$, then $\eta_{w, v} = \eta_{u, v} \circ \eta_{w, u}$\footnote{We only require extensional equality.}.
  \end{itemize}
  A model is \emph{adequate} if it is based on an adequate frame and it is:
  \begin{itemize}
    \item concordant: if $wRu$, then $c^{I_u} = \eta_{w, u}(c^{I_w})$ for every constant $c$.
  \end{itemize}
  Note that in an adequate and rooted model the interpretation of the constants is fully determined by their interpretation at the root.
\end{definition}

As in \cite{QRC1}, we use assignments to define truth at a world in a first-order model. A $w$-assignment $g$ is a function assigning a member of the domain $M_w$ to each variable in the language. 
Any $w$-assignment can be seen as a $v$-assignment as long as $wRv$, by composing it with $\eta_{w, v}$ on the left. We write $g^v$ to shorten $\eta_{w, v} \circ g$ when $w$ is clear from the context.

Two $w$-assignments $g$ and $h$ are \emph{$x$-alternative}, written $g \xaltern{x} h$, if they coincide on all variables other than $x$.
A $w$-assignment $g$ is extended to terms by defining $g(c) := c^{I_w}$ for any constant $c$. Note that this meshes nicely with the concordant restriction of an adequate model: for any term $t$, if $wRv$ then $g^v(t) = \eta_{w, v}(g(t))$.

We are finally ready to define satisfaction at a world. This definition is a straightforward adaptation of the one presented in \cite{QRC1} to our current definition of model. The only difference is in the case of $\Diamond \varphi$, where we use $g^v$ instead of $g^\iota$.

\begin{definition}
  Let $\M = \la W, R, \{M_w\}_{w \in W}, \{\eta_{w, u}\}_{wRu}, \{I_w\}_{w \in W}, \{J_w\}_{w \in W} \ra$ be an adequate model in some signature $\Sigma$, and let $w \in W$ be a world, $g$ be a $w$-assignment, $S$ be an $n$-ary relation symbol, and $\varphi, \psi$ be formulas in the language of $\Sigma$.

  We define $\M, w \Vdash^g \varphi$ ($\varphi$ is true at $w$ under $g$) by induction on $\varphi$ as follows.
  \begin{itemize}

    \item $\M, w \Vdash^g \top$;

    \item $\M, w \Vdash^g S(t_0, \hdots, t_{n-1})$ iff $\la g(t_0), \hdots, g(t_{n-1}) \ra \in S^{J_w}$;

    \item $\M, w \Vdash^g \varphi \land \psi$ iff both $\M, w \Vdash^g \varphi$ and $\M, w \Vdash^g \psi$;

    \item $\M, w \Vdash^g \Diamond \varphi$ iff there is a $v \in W$ such that $wRv$ and $\M, v \Vdash^{g^v} \varphi$;

    \item $\M, w \Vdash^g \Forall{x} \varphi$ iff for all $w$-assignments $h$ such that $h \xaltern{x} g$, we have $\M, w \Vdash^{h} \varphi$.

  \end{itemize}
\end{definition}

\begin{theorem}[Relational soundness]
\label{thm:modal_soundness}
If $\varphi \vdash \psi$, then for any adequate model $\M$, for any world $w \in W$, and for any $w$-assignment $g$:
  \begin{gather*}
    \M, w \Vdash^g \varphi
    \implies
    \M, w \Vdash^g \psi
    .
  \end{gather*}
\end{theorem}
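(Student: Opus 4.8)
The plan is to prove the statement by induction on a $\QRC$-derivation of $\varphi \vdash \psi$, checking that each axiom is sound and each rule preserves soundness, where ``sound'' means: for every adequate model $\M$, every world $w$, and every $w$-assignment $g$, truth of the antecedent at $(w,g)$ implies truth of the consequent at $(w,g)$. Before starting the induction I would isolate two auxiliary lemmas, each by a routine induction on formulas: a \emph{coincidence lemma} (if $g$ and $h$ agree on $\fv(\chi)$, then $\M,w\Vdash^g\chi$ iff $\M,w\Vdash^h\chi$) and a \emph{substitution lemma} (if $t$ is free for $x$ in $\chi$, then $\M,w\Vdash^g\chi\subst{x}{t}$ iff $\M,w\Vdash^{g'}\chi$, where $g'\xaltern{x}g$ and $g'(x)=g(t)$; this uses $g(c)=c^{I_w}$ and the already-noted identity $g^\eta(t)=\eta_{w,u}(g(t))$).

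With these in hand, the axioms (i)--(iii) and the cut rule (iv) are the usual propositional bookkeeping, and the necessitation rule (v) is immediate from the induction hypothesis applied at the witnessing successor under the assignment $g^\eta$. The transitivity axiom (vi), $\Diamond\Diamond\varphi\vdash\Diamond\varphi$, is where the relaxed (non-inclusive) definition of model earns its keep: unwinding $\M,w\Vdash^g\Diamond\Diamond\varphi$ yields $wRu$ and $uRv$ with $\M,v\Vdash^{(g^{\eta_{w,u}})^{\eta_{u,v}}}\varphi$, and transitivity of $R$ together with the adequacy clause $\eta_{w,v}=\eta_{u,v}\circ\eta_{w,u}$ gives both $wRv$ and $(g^{\eta_{w,u}})^{\eta_{u,v}}=g^{\eta_{w,v}}$, hence $\M,w\Vdash^g\Diamond\varphi$. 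The quantifier rules are handled by the two lemmas: (vii) uses the coincidence lemma (since $x\notin\fv(\varphi)$, every $x$-variant of $g$ still satisfies $\varphi$), while (viii) and (ix) use the substitution lemma (for (viii), instantiate the universally quantified $\varphi$ at the $x$-variant sending $x$ to $g(t)$; for (ix), apply the lemma once on each side of the implication).

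The step I expect to be the main obstacle is rule (x): from $\varphi\subst{x}{c}\vdash\psi\subst{x}{c}$ with $c$ occurring in neither $\varphi$ nor $\psi$, conclude $\varphi\vdash\psi$. Given an adequate $\M$, a world $w$, a $w$-assignment $g$, and $\M,w\Vdash^g\varphi$, the idea is to pass to the submodel generated by $w$ and there reinterpret $c$ so that $c$ denotes $g(x)$ at $w$, propagating it concordantly by setting $c^{I'_v}:=\eta_{w,v}(g(x))$ at each successor $v$; the resulting model $\M'$ is again adequate precisely because the $\eta$'s respect transitivity. As $c$ occurs in neither $\varphi$ nor $\psi$, truth of these formulas is unaffected by the change, and the substitution lemma gives $\M',w\Vdash^g\varphi\subst{x}{c}$; applying the induction hypothesis to the shorter derivation inside $\M'$ yields $\M',w\Vdash^g\psi\subst{x}{c}$, and translating back gives $\M,w\Vdash^g\psi$. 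The delicate point is the behaviour of this new interpretation at worlds that $w$ sees and that see $w$ back (for instance a reflexive $w$), where concordance forces $c^{I'_w}$ to be an $\eta_{w,w}$-fixed point; this is the one place where I would have to argue carefully — or restrict attention to the models that actually matter downstream — and I expect it to be the crux of the proof.
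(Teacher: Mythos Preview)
Your proposal is correct and follows essentially the same route as the paper: induction on the derivation, with the transitivity axiom handled via the compatibility condition $\eta_{w,v}=\eta_{u,v}\circ\eta_{w,u}$ and rule~(x) via reinterpreting $c$ in a modified (generated) model, propagated along the $\eta$ functions. The subtlety you flag about reflexive worlds---where concordance would force $c^{I'_w}$ to be an $\eta_{w,w}$-fixed point---is not addressed in the paper's sketch either; it simply declares the propagation ``a simple matter,'' so your caution there is well placed rather than a divergence from the intended argument.
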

\begin{proof}
  By induction on the proof of $\varphi \vdash \psi$, making the same arguments as in \cite{QRC1}. Here we highlight only the transitivity axiom, where the transitivity of the $\eta$ functions comes into play, and also remark on the generalization on constants rule.

  The transitivity axiom is $\Diamond \Diamond \varphi \vdash \Diamond \varphi$, so assume that $\M, w \Vdash^g \Diamond \Diamond \varphi$. Then there is a world $v$ such that $wRv$ and $\M, v \Vdash^{\eta_{w, v} \circ g} \Diamond \varphi$, and also a subsequent world $u$ such that $vRu$ and $\M, u \Vdash^{\eta_{v, u} \circ (\eta_{w, v} \circ g)} \varphi$. Since $R$ is transitive, we know that $wRu$ and thus that $\eta_{v, u} \circ (\eta_{w, v} \circ g)$ coincides with $\eta_{w, u} \circ g$. Then $\M, u \Vdash^{\eta_{w, u} \circ g} \psi$, and consequently $\M, w \Vdash^{g} \Diamond \varphi$, as desired.

  The soundness of the generalization on constants rule, Rule~\ref{rule:constants}, is the most involved part of the proof presented in \cite{QRC1} and depends on the construction of a model where the interpretation of a constant is changed. Building that model in this context is a simple matter of taking care to propagate that change to all future worlds using the $\eta$ functions.
\end{proof}

We end this section by noting that, even though the language of $\QRC$ is quite restricted, even its $\forall$ fragment requires counter-models with arbitrarily large domains. For example, the sequent
\begin{equation*}
  \Forall{x, y} S(x, x, y)
  \land \Forall{x, y} S(x, y, x)
  \land \Forall{x, y} S(y, x, x)
  \vdash
  \Forall{x, y, z} S(x, y, z)
\end{equation*}
is unprovable in $\QRC$, but satisfied by every world with at most two domain elements. This reasoning can be extended to any $n$: if $S$ is an $n$-ary predicate symbol, let $\varphi_n$ be the conjunction of the $n(n-1)/2$ formulas of the form $\Forall{x_0, \hdots, x_{n-2}} S(\hdots, x_0, \hdots, x_0, \hdots)$, with $x_0$ appearing in every possible pair of positions and every other position filled by a unique variable. Then $\varphi_n$ does not entail $ \psi_n := \Forall{x_0, \hdots, x_{n-1}} S(x_0, \hdots, x_{n-1})$ but any world with at most $n-1$ domain elements that satisfies $\varphi_n$ must also satisfy $\psi_n$.

\section{Constant domain completeness}
\label{sec:constant_domain}

In \cite{QRC1} we proved relational completeness by building a term model that satisfies $\varphi$ and doesn't satisfy $\psi$ when $\varphi \not\vdash \psi$. That construction provides a finite, irreflexive, and rooted model with increasing domains. Here we show that it is possible to build a constant domain model instead, that is, a model where the domain of every world is exactly the same. This is extremely useful to prove the arithmetical completeness theorem in Section~\ref{sec:arithmetical_completeness}.

Before starting the formal proof, we briefly describe the main idea. The term models we build are such that each world is a pair of sets of closed formulas $p = \pair{p^+}{p^-}$. The first set, $p^+$, is the set of formulas that will be satisfied at that world, or the \emph{positive part}. The second set, $p^-$, is the set of formulas that will not be satisfied at that world, or the \emph{negative part}. All worlds must be well-formed with respect to some set of closed formulas $\Phi$, which means that:
\begin{itemize}
  \item $p$ is closed: every formula in $p^+$ and every formula in $p^-$ is closed;
  \item $p$ is $\Phi$-maximal: every formula of $\Phi$ is in either $p^+$ or $p^-$ (and there are no formulas in $p$ but not in $\Phi$);
  \item $p$ is consistent: if $\delta \in p^-$ then $\bigwedge p^+ \not\vdash \delta$; and
  \item $p$ is fully-witnessed: if $\Forall{x} \varphi \in p^-$ then there is a constant $c$ such that $\varphi\subst{x}{c} \in p^-$.
\end{itemize}
In that case we say that $p$ is $\Phi$-maximal consistent and fully witnessed, or $\Phi$-MCW for short (the closeness condition is included in the concept, although in practice almost every formula in this section will be closed). If $p$ and $q$ are pairs, we write $p \subseteq q$ when $p^+ \subseteq q^+$ and $p^- \subseteq q^-$. Furthermore, if $\Gamma$ is a set of formulas we write $p \subseteq \Gamma$ instead of $p^+ \cup p^- \subseteq \Gamma$.

We want to have $\Phi$-MCW pairs where $\Phi$ is closed under subformulas. However, the naive subformulas of $\Forall{x} \varphi$ might be open. In order to avoid that, we use the notion of closure with respect to a set of constants $C$ defined in \cite{QRC1}, where $\varphi\subst{x}{c}$ is a valid subformula of $\Forall{x} \varphi$ as long as $c \in C$.

\begin{definition}[$\Cl_C$, \cite{QRC1}]
  Given a set of constants $C$, the closure of a formula $\varphi$ under $C$, written $\Cl_C(\varphi)$, is defined by induction on the formula as such: $\Cl_C(\top) := \{\top\}$; $\Cl_C(S(t_0, \hdots, t_{n-1})) := \{S(t_0, \hdots, t_{n-1})), \top\}$; $\Cl_C(\varphi \land \psi) := \{\varphi \land \psi\} \cup \Cl_C(\varphi) \cup \Cl_C(\psi)$; $\Cl_C(\Diamond \varphi) := \{\Diamond \varphi\} \cup \Cl_C(\varphi)$; and
  \begin{equation*}
    \Cl_C(\Forall{x} \varphi) :=
      \{\Forall{x} \varphi\}
      \cup
      \bigcup_{c \in C} \Cl_C(\varphi\subst{x}{c})
    .
  \end{equation*}

  The closure under $C$ of a set of formulas $\Gamma$ is the union of the closures under $C$ of each of the formulas in $\Gamma$:
  \begin{equation*}
    \Cl_C(\Gamma) := \bigcup_{\gamma \in \Gamma} \Cl_C(\gamma).
  \end{equation*}
  The closure of a pair $p$ is defined as the closure of $p^+ \cup p^-$.
\end{definition}

Going back to the overview of the completeness proof, suppose that $\varphi \not\vdash \psi$, (assuming for now that $\varphi$ and $\psi$ are closed). Defining $p := \pair{\{\varphi\}}{\{\psi\}}$, the counter-model will be rooted on a $\Cl_C(p)$-MCW extension of $p$, where $C$ is a set of constants to determine. The set of constants $C$ will be used as the domain of the root. Note that $p$ is already closed and consistent, so taking the step to maximality is as simple as deciding whether to add each $\chi \in \Cl_C(p)$ to the positive or negative part of the root without ruining its consistency. The hard part is doing so in a way that guarantees that the resulting pair is fully witnessed.

The completeness proof shown in \cite{QRC1} uses the observation that if $\bigwedge p^+ \not\vdash \Forall{x} \chi$, then also $\bigwedge p^+ \not\vdash \chi\subst{x}{c}$, as long as $c$ does not appear in either $p^+$ or $\chi$ (Lemma~\ref{item:constants_forall}). This suggests a way of sorting the formulas of $\Cl_C(p)$ into positive and negative: mark a formula as positive if and only if it is a consequence of $p^+$. This guarantees that there are witnesses for the negative universal formulas as long as there are enough constants to go around. The way to make sure that there are enough constants is precisely the difference between the proof presented in \cite{QRC1} and the proof presented here. To that end, we introduce the following definition.

\begin{definition}[$\cdepth$]
  The number of different constants in a formula $\varphi$ is represented by $\cdepth(\varphi)$.
  The maximum number of different constants per formula in a set of formulas $\Gamma$ is defined as $\cdepth(\Gamma) := \max_{\varphi \in \Gamma}\{\cdepth(\varphi)\}$.
\end{definition}
Note that $\cdepth(\Gamma)$ is \emph{not} the number of different constants appearing in $\Gamma$, but the maximum number of different constants in any single formula of $\Gamma$. For example, $\cdepth(\{S_0(c_0, c_1), S_1(c_2)\}) = 2$.

We observe that the maximum number of distinct constants per formula in the closure under $C$ of a set of formulas can be bounded by a number that \emph{does not depend on $C$}.

\begin{remark}
\label{rem:cdepth}

For any formula $\varphi$, set of formulas $\Phi$, and set of constants $C$:
  \begin{itemize}
    \item $\cdepth(\varphi) \leq \cdepth(\varphi\subst{x}{c}) \leq \cdepth(\varphi) + 1$;
    \item $\cdepth(\varphi) \leq \cdepth(\Cl_C(\varphi)) \leq \cdepth(\varphi) + \udepth(\varphi)$;
    \item $\cdepth(\Phi) \leq \cdepth(\Cl_C(\Phi)) \leq \cdepth(\Phi) + \udepth(\Phi)$.
  \end{itemize}
\end{remark}

We are now ready to prove a Lindenbaum-like lemma.

\begin{lemma}
\label{lem:lindenbaum}
Given a finite signature $\Sigma$ with constants $C$ and a finite set of closed formulas $\Phi$ in the language of $\Sigma$ such that $|C| > 2\cdepth(\Phi) + 2\udepth(\Phi)$, if $p \subseteq \Cl_C(\Phi)$ is a closed consistent pair and $p^+$ is a singleton, then there is a pair $q \supseteq p$ in the language of $\Sigma$ such that $q$ is $\Cl_C(\Phi)$-MCW, and $\mdepth(q^+) = \mdepth(p^+)$. 
\end{lemma}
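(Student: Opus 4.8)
The plan is to build $q$ from $p$ in stages, at each stage deciding the membership of one formula $\chi \in \Cl_C(\Phi)$ in $q^+$ or $q^-$, while maintaining three invariants: the pair is closed and consistent, its positive part remains a singleton (up to logical equivalence — more precisely, $q^+$ will be of the form $\{\varphi_0 \land \dots \land \varphi_k\}$, so that $\bigwedge q^+$ is a single formula and $\mdepth(q^+) = \mdepth(p^+)$ by Lemma~\ref{lem:mdepth} since each conjunct is a subformula-closure element of $p$), and the pair remains extendable to a fully-witnessed one. Enumerate $\Cl_C(\Phi)$ as $\chi_0, \dots, \chi_{m-1}$. At step $i$, given the current pair $p_i$, check whether $\bigwedge p_i^+ \vdash \chi_i$: if so, add $\chi_i$ to the positive part (replacing $\{\theta\}$ by $\{\theta \land \chi_i\}$); if not, add $\chi_i$ to the negative part. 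Consistency is preserved: in the first case we cannot have made some $\delta$ provable that wasn't, since $\bigwedge p_i^+ \land \chi_i$ is equivalent to $\bigwedge p_i^+$; in the second case, putting $\chi_i$ negatively is consistent exactly because $\bigwedge p_i^+ \not\vdash \chi_i$. The final pair $q := p_m$ is $\Cl_C(\Phi)$-maximal by construction, and closed and consistent.

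The real work is verifying that $q$ is fully-witnessed, i.e.\ that whenever $\Forall{x}\varphi \in q^-$, some witness $\varphi\subst{x}{c} \in q^-$ with $c \in C$. By construction $\Forall{x}\varphi \in q^-$ means $\bigwedge q^+ \not\vdash \Forall{x}\varphi$. By Lemma~\ref{item:constants_forall}, for any constant $c$ not occurring in $\bigwedge q^+$ nor in $\varphi$, we have $\bigwedge q^+ \not\vdash \varphi\subst{x}{c}$, hence $\varphi\subst{x}{c} \in q^-$ provided $\varphi\subst{x}{c} \in \Cl_C(\Phi)$ — which holds whenever $\Forall{x}\varphi \in \Cl_C(\Phi)$ and $c \in C$, by the definition of $\Cl_C$. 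So it suffices to exhibit a single $c \in C$ that occurs neither in $\varphi$ nor in any conjunct of $q^+$. This is a counting argument. The constants appearing in $\varphi$ number at most $\cdepth(\Forall{x}\varphi) \le \cdepth(\Cl_C(\Phi)) \le \cdepth(\Phi) + \udepth(\Phi)$ by Remark~\ref{rem:cdepth}. The constants appearing in $q^+$: every conjunct lies in $\Cl_C(\Phi)$, and a priori there could be many conjuncts, so naively the bound would depend on $|C|$. Here is where the hypothesis $p^+$ singleton and the careful bookkeeping must save us.

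The main obstacle, then, is controlling the number of distinct constants in $\bigwedge q^+$. I expect the resolution to be: although $q^+$ may contain many conjuncts, we only ever care about constants that could block a \emph{specific} negative universal formula, and we can process the negative universal formulas one at a time, each time having spent only finitely many constants so far. Concretely, I would restructure the induction so that the constants ``used up'' as witnesses are tracked, and show that at the point we need a witness for $\Forall{x}\varphi$, the number of forbidden constants (those in $\varphi$, plus those already committed as witnesses for earlier negative universals, plus those already appearing positively from the original $p$) is at most roughly $2\cdepth(\Phi) + 2\udepth(\Phi)$ — the factor $2$ absorbing both the constants from $\Phi$ itself and the fresh witness constants, each family bounded using Remark~\ref{rem:cdepth}. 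Since $|C|$ strictly exceeds this bound, a fresh witness constant always exists. The delicate point is that adding a witness $\varphi\subst{x}{c}$ negatively may itself be a universal formula needing its own witness, so the argument must be set up as an induction on $\udepth$ (or on the structure of $\Cl_C$) to ensure the process terminates and the constant budget is respected at every level; this is exactly why the bound involves $\udepth(\Phi)$ and not just $\cdepth(\Phi)$.
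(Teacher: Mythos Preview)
Your construction is essentially the paper's, but you have missed the one-line observation that dissolves the obstacle you spend the last two paragraphs worrying about. You yourself note that at every stage $\bigwedge p_i^+$ is logically equivalent to $\bigwedge p^+$; in particular $\bigwedge q^+ \dashv\vdash \bigwedge p^+$. Hence $\Forall{x}\varphi \in q^-$ means $\bigwedge p^+ \not\vdash \Forall{x}\varphi$, and to apply Lemma~\ref{item:constants_forall} you only need a constant $c$ avoiding the \emph{original} $p^+$ and $\varphi$, not the bloated $q^+$. Since $p^+$ is a single formula in $\Cl_C(\Phi)$ and $\Forall{x}\varphi \in \Cl_C(\Phi)$, each contributes at most $\cdepth(\Phi) + \udepth(\Phi)$ constants by Remark~\ref{rem:cdepth}, so the hypothesis $|C| > 2\cdepth(\Phi) + 2\udepth(\Phi)$ immediately yields such a $c$. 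Then $\bigwedge p^+ \not\vdash \varphi\subst{x}{c}$, whence $\bigwedge q^+ \not\vdash \varphi\subst{x}{c}$, and $\varphi\subst{x}{c} \in q^-$. This is exactly the paper's argument; the only difference is that the paper defines $q$ in one shot (put $\chi \in q^+$ iff $p^+ \vdash \chi$) rather than iteratively, which makes the role of $p^+$ as the fixed reference point more transparent.

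Your proposed workaround of tracking ``used'' witness constants and inducting on $\udepth$ is both unnecessary and, as sketched, not obviously correct: the number of universally quantified formulas in $\Cl_C(\Phi)$ can be of order $|C|^{\udepth(\Phi)-1}$ (e.g.\ from $\Forall{x}\Forall{y}\,S(x,y)$ one gets $\Forall{y}\,S(c,y)$ for every $c\in C$), so a naive per-formula budget of one fresh constant would blow the bound. You would have to argue that witnesses can be reused across these many formulas, which is true but is precisely what the direct argument via $p^+$ gives you for free.
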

\begin{proof}
  Like in \cite{QRC1}, we start by defining a pair $q \supseteq p$ such that for each $\chi \in \Cl_C(\Phi)$, $\chi \in q^+$ if and only if $p^+ \vdash \chi$ (otherwise $\chi \in q^-$). It is easy to see that this pair $q$ is $\Cl_C(\Phi)$-maximal consistent, and we have $\mdepth(q^+) = \mdepth(p^+)$ by Lemma~\ref{lem:mdepth}.

  It remains to show that $q$ is fully witnessed. Let $\Forall{x} \psi$ be a formula in $q^-$. We claim that there is $d \in C$ such that $d$ does not appear either in $p^+$ or in $\Forall{x} \psi$. For this it is enough to see that $|C| > \cdepth(\bigwedge p^+) + \cdepth(\Forall{x} \psi)$, which is the same as $|C| > \cdepth(p^+) + \cdepth(\Forall{x} \psi)$ because $p^+$ is a singleton by assumption. Since $p^+ \subseteq \Cl_C(\Phi)$, we know that $\cdepth(p^+) \leq \cdepth(\Cl_C(\Phi))$, and similarly for $\Forall{x} \psi$. Then by Remark~\ref{rem:cdepth} we may conclude that $\cdepth(p^+) + \cdepth(\Forall{x} \psi) \leq 2\cdepth(\Phi) + 2\udepth(\Phi)$, which suffices by our assumption on the size of $C$.

  Since $d \in C$ does not appear in either $p^+$ or $\Forall{x} \psi$, we conclude that $p^+ \not\vdash \psi\subst{x}{d}$ by Lemma~\ref{item:constants_forall}, and consequently $\psi\subst{x}{d} \in q^-$ as desired.
\end{proof}

We now recall the definition of $\hat{R}$ from \cite{QRC1}, which is the relation we use to connect the worlds of the term model.

\begin{definition}[$p\hat{R}q$, \cite{QRC1}]
  The relation $\hat{R}$ between pairs is such that $p\hat{R}q$ if and only if both of following hold:
  \begin{itemize}
    \item for any formula $\Diamond \varphi \in p^-$ we have $\varphi, \Diamond \varphi \in q^-$; and
    \item there is some formula $\Diamond \psi \in p^+ \cap q^-$.
  \end{itemize}
\end{definition}

\begin{lemma}[\cite{QRC1}]
\label{lem:hatR-transitive-irreflexive}
  The relation $\hat{R}$ restricted to consistent pairs is transitive and irreflexive.
\end{lemma}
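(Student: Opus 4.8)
The plan is to unpack the definition of $\hat R$ and verify each property by hand; irreflexivity is where the consistency hypothesis is used, while transitivity will turn out to follow from the definition alone. For irreflexivity, suppose toward a contradiction that $p\hat Rp$ for some consistent pair $p$. By the second clause in the definition of $\hat R$ there is a formula $\Diamond\psi$ with $\Diamond\psi\in p^+$ and $\Diamond\psi\in p^-$. Since $\Diamond\psi$ occurs as a conjunct of $\bigwedge p^+$, repeated use of the conjunction-elimination axioms together with cut yields $\bigwedge p^+\vdash\Diamond\psi$. But $\Diamond\psi\in p^-$ together with the consistency of $p$ say exactly that $\bigwedge p^+\not\vdash\Diamond\psi$, a contradiction; hence $p$ is not $\hat R$-related to itself.

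For transitivity, assume $p\hat Rq$ and $q\hat Rr$ with $p,q,r$ consistent, and check the two clauses of $p\hat Rr$. For the first clause, let $\Diamond\varphi\in p^-$; from $p\hat Rq$ we get $\varphi,\Diamond\varphi\in q^-$, and then from $\Diamond\varphi\in q^-$ and $q\hat Rr$ we get $\varphi,\Diamond\varphi\in r^-$, as required. For the second clause, $p\hat Rq$ supplies a formula $\Diamond\psi\in p^+\cap q^-$; applying the first clause of $q\hat Rr$ to $\Diamond\psi\in q^-$ gives $\psi,\Diamond\psi\in r^-$, so $\Diamond\psi\in p^+\cap r^-$ and the second clause of $p\hat Rr$ holds. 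Note that consistency of $p$, $q$, $r$ is not actually needed for this half of the lemma.

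The step I would flag as the one to get right is the second clause of transitivity: in the classical canonical-model construction for transitive provability logics one expects to invoke the transitivity axiom $\Diamond\Diamond\varphi\vdash\Diamond\varphi$ precisely at this point. Here no such appeal is needed, because the definition of $\hat R$ already builds in downward persistence of negated diamonds (the clause $\Diamond\varphi\in p^-$ forces $\Diamond\varphi\in q^-$), and this is exactly what carries the witnessing diamond $\Diamond\psi$ from $q^-$ down into $r^-$. The transitivity axiom instead does its work elsewhere in the completeness argument, when one shows that enough successor worlds exist, rather than here. Everything else is bookkeeping with the two clauses of the definition.
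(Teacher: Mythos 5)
Your proof is correct and is the standard argument (the paper defers to \cite{QRC1}, where essentially this same case-check is carried out): irreflexivity follows from clause 2 plus consistency, and transitivity from chaining clause 1 through the intermediate pair. Your side remark is also accurate --- the transitivity axiom $\Diamond\Diamond\varphi\vdash\Diamond\varphi$ is needed not here but in the pair existence lemma, to verify consistency of the successor pair.
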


We now see that if $w$ is a $\Cl_C(\Phi)$-MCW pair with $\Diamond \varphi \in w^+$, we can find a $\Cl_C(\Phi)$-MCW pair $v$ with $\varphi \in v^+$ and $w\hat{R}v$. The proof is the same as in \cite{QRC1}, except that we now use Lemma~\ref{lem:lindenbaum} to obtain constant domains throughout the model.

\begin{lemma}[Pair existence]
\label{lem:pair_existence}
Let $\Sigma$ be a signature with a finite set of constants $C$, and $\Phi$ be a finite set of closed formulas in the language of $\Sigma$ such that $|C| > 2\cdepth(\Phi) + 2\udepth(\Phi)$. If $p$ is a $\Cl_C(\Phi)$-MCW pair and $\Diamond \varphi \in p^+$, then there is a $\Cl_{C}(\Phi)$-MCW pair $q$ such that $p\hat{R}q$, $\varphi \in q^+$, and $\mdepth(q^+) < \mdepth(p^+)$.
\end{lemma}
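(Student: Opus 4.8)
The plan is to build the witness pair $q$ in two stages: first specify a small \emph{seed} pair that already contains $\varphi$ positively and records the modal information forced by $p\hat{R}q$, then apply Lemma~\ref{lem:lindenbaum} to extend the seed to a $\Cl_C(\Phi)$-MCW pair, finally checking that $p\hat{R}q$ and the modal depth drop survive the extension. Concretely, set $q_0 := \pair{\{\varphi\}}{\{\delta, \Diamond\delta : \Diamond\delta \in p^-\} \cup \{\Diamond\psi\}}$, where $\Diamond\psi \in p^+$ is our chosen formula; since $p \subseteq \Cl_C(\Phi)$ and $\Cl_C(\Phi)$ is closed under the relevant subformulas, $\varphi \in \Cl_C(\Phi)$ and all the listed negative formulas lie in $\Cl_C(\Phi)$ as well, so $q_0 \subseteq \Cl_C(\Phi)$. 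The positive part $\{\varphi\}$ is a singleton and closed (because $\Diamond\varphi \in p^+$ is closed), as required by Lemma~\ref{lem:lindenbaum}.

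The key step is checking that $q_0$ is consistent, i.e.\ that for every $\delta \in q_0^-$ we have $\varphi \not\vdash \delta$. For the formulas coming from $\Diamond$-negative formulas of $p$: if $\Diamond\delta \in p^-$ and we had $\varphi \vdash \delta$, then by Rule~\ref{rule:necessitation} $\Diamond\varphi \vdash \Diamond\delta$, and since $\Diamond\varphi \in p^+$ and $p$ is consistent this contradicts $\Diamond\delta \in p^-$; similarly $\varphi \vdash \Diamond\delta$ would give $\Diamond\varphi \vdash \Diamond\Diamond\delta \vdash \Diamond\delta$ using the transitivity axiom, again a contradiction. For the remaining formula, $\varphi \not\vdash \Diamond\psi$ holds by Lemma~\ref{lem:mdepth} since $\mdepth(\Diamond\psi) = \mdepth(\psi)+1 > \mdepth(\psi) \geq \mdepth(\varphi)$ — here I use that $\varphi$ is a strict subformula sitting under the diamond $\Diamond\varphi \in p^+ \subseteq \Cl_C(\Phi)$, so $\mdepth(\varphi) \leq \mdepth(\Diamond\varphi) - 1$, and more to the point $\mdepth(\varphi) < \mdepth(\Diamond\varphi) \le \mdepth(\Diamond\psi)$ need not literally hold, so instead I argue directly: $\mdepth(\varphi) = \mdepth(\Diamond\varphi) - 1$ and if $\varphi \vdash \Diamond\psi$ then $\mdepth(\varphi) \geq \mdepth(\Diamond\psi) = \mdepth(\psi) + 1$; but we do not a priori control $\mdepth(\psi)$ versus $\mdepth(\varphi)$. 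The clean fix is to note $\varphi \not\vdash \Diamond\varphi$ only handles $\psi = \varphi$; for a general $\Diamond\psi \in p^+$, instead simply observe that $q_0^-$ need not contain $\Diamond\psi$ at all — condition~(2) of $\hat{R}$ only requires \emph{some} $\Diamond\psi \in p^+ \cap q^-$, and such a $\psi$ will be available after the Lindenbaum extension because of the modal depth drop, as explained below. So I would instead take $q_0 := \pair{\{\varphi\}}{\{\delta, \Diamond\delta : \Diamond\delta \in p^-\}}$, whose consistency is exactly the first computation above.

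Next apply Lemma~\ref{lem:lindenbaum} to $q_0$ (legitimate since $|C| > 2\cdepth(\Phi) + 2\udepth(\Phi)$, $q_0 \subseteq \Cl_C(\Phi)$ is closed, consistent, and $q_0^+$ is a singleton) to obtain a $\Cl_C(\Phi)$-MCW pair $q \supseteq q_0$ with $\mdepth(q^+) = \mdepth(q_0^+) = \mdepth(\varphi) = \mdepth(\Diamond\varphi) - 1 < \mdepth(\Diamond\varphi) \le \mdepth(p^+)$, which gives the modal depth inequality. For $p\hat{R}q$: condition~(1) holds by construction of $q_0$ and $q \supseteq q_0$. For condition~(2), I need some $\Diamond\chi \in p^+ \cap q^-$. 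Take $\Diamond\varphi$ itself: $\Diamond\varphi \in p^+$, and $\Diamond\varphi \in \Cl_C(\Phi)$, so by $\Cl_C(\Phi)$-maximality $\Diamond\varphi$ is in $q^+$ or $q^-$; it cannot be in $q^+$ because $\mdepth(q^+) = \mdepth(\Diamond\varphi) - 1$ forbids a formula of modal depth $\mdepth(\Diamond\varphi)$ from appearing in $q^+$ (using Lemma~\ref{lem:mdepth}: if $\Diamond\varphi \in q^+$ then $\bigwedge q^+ \vdash \Diamond\varphi$ would need $\mdepth(q^+) \ge \mdepth(\Diamond\varphi)$, contradiction — strictly one checks $\Diamond\varphi$ is one of the conjuncts so this is immediate). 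Hence $\Diamond\varphi \in q^-$, giving condition~(2). Finally $\varphi \in q^+$ since $\varphi \in q_0^+ \subseteq q^+$.

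The main obstacle is the consistency check for $q_0$, which is where the modal axioms (necessitation and $\Diamond\Diamond\varphi \vdash \Diamond\varphi$) and the consistency of $p$ are genuinely used; everything else is bookkeeping about closures and the size bound on $C$, which is inherited from Lemma~\ref{lem:lindenbaum}. One should also double-check that choosing $\Diamond\varphi$ as the witness for $\hat{R}$-condition~(2) is legitimate, i.e.\ that $\Diamond\varphi \in \Cl_C(\Phi)$; this follows because $\Diamond\varphi \in p^+ \subseteq \Cl_C(\Phi)$ by hypothesis.
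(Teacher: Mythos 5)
Your proof is correct and follows essentially the same route as the paper: seed pair $\pair{\{\varphi\}}{\{\delta,\Diamond\delta : \Diamond\delta\in p^-\}\cup\cdots}$, consistency via necessitation and transitivity against the consistency of $p$, then Lemma~\ref{lem:lindenbaum} to extend while preserving $\mdepth$. The only divergence is condition~(2) of $\hat{R}$: the paper simply puts $\Diamond\varphi$ itself into the seed's negative part, with consistency given by irreflexivity ($\varphi\not\vdash\Diamond\varphi$, Lemma~\ref{lem:mdepth}) — which dissolves the worry about a general $\Diamond\psi$ that detains you mid-proof — whereas you recover $\Diamond\varphi\in q^-$ a posteriori from $\mdepth(q^+)=\mdepth(\varphi)$ together with maximality; both arguments are sound.
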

\begin{proof}
  Consider the pair $r$ defined as $r^+ := \{\varphi\}$ and $r^- := \{\delta, \Diamond \delta \mid \Diamond \delta \in p^-\} \cup \{\Diamond \varphi\}$. It is easy to check that $r$ is consistent and that $p\hat{R}r$ (details can be found in \cite{QRC1}), and clearly $r \subseteq \Cl_C(\Phi)$.
  We then use Lemma~\ref{lem:lindenbaum} to obtain a $\Cl_{C}(\Phi)$-MCW pair $q \supseteq r$ such that $\mdepth(q^+) = \mdepth(r^+) = \mdepth(\varphi) < \mdepth(p^+)$. We obtain $p\hat{R}q$ as a straightforward consequence of $p\hat{R}r$.
\end{proof}

We can now define an adequate and constant domain model $\M[p]$ from any given finite and consistent pair $p$ such that $\M[p]$ satisfies the formulas in $p^+$ and doesn't satisfy the formulas in $p^-$.
The idea is exactly the same as in \cite{QRC1}: build a term model where each world $w$ is a $\Cl_{M}(p)$-MCW pair, and the worlds are related by (a sub-relation of) $\hat{R}$.

\begin{definition}
\label{def:Mp}
Let $\Sigma$ be a signature. Given a finite consistent pair $p$ of closed formulas in $\Sigma$ such that $p^+$ is a singleton\footnote{This is without loss of generality, as otherwise we could take the conjunction of every formula in $p^+$ as the new $p^+$.}, we define an adequate model $\M[p]$. Here we will use $\Phi := p^+ \cup p^-$.

Let $C$ be a set of at least $2\cdepth(\Phi) + 2\udepth(\Phi) + 1$ different constants, including all of the ones appearing in $\Sigma$ and adding more if necessary. The pairs of formulas we work with are in the signature $\Sigma$ extended by $C$.
  
We start by defining the underlying frame in an iterative manner. The root is given by Lemma~\ref{lem:lindenbaum} applied to $C$ and $p$, obtaining the $\Cl_C(\Phi)$-MCW pair $q$. Frame $\F^0$ is then defined such that its set of worlds is $W^0 := \{q\}$, its relation $R^0$ is empty, and the domain of $q$ is $M^0_{q} := C$.

Assume now that we already have a frame $\F^i$, and we set out to define $\F^{i+1}$ as an extension of $\F^i$. For each leaf $w$ of $\F^i$, i.e., each world such that there is no world $v \in \F^i$ with $w R^i v$, and for each formula $\Diamond \varphi \in w^+$, use Lemma~\ref{lem:pair_existence} to obtain a $\Cl_C(\Phi)$-MCW pair $v$ such that $w\hat{R}v$, $\varphi \in v^+$, and $\mdepth(v^+) < \mdepth(w^+)$. Now add $v$ to $W^{i+1}$, add $\la w, v \ra$ to $R^{i+1}$, define $M^{i+1}_v$ as $C$, and define $\eta_{w, v}$ as the identity function.

The process described above terminates because each pair is finite and the modal depth of $\Cl_C(\Phi)$ (and consequently of $w^+$, for any $w \subseteq \Cl_C(\Phi)$) is also finite. Thus there is a final frame $\F^m$, for some natural number $m$. This frame is constant domain by construction, but not transitive. We obtain $\F[p]$ as the transitive closure of $\F^m$, which is clearly still constant domain. The $\eta$ functions are all the identity in $C$, thus satisfying the transitivity condition. We conclude that the frame $\F[p]$ is adequate.

  In order to obtain the model $\M[p]$ based on the frame $\F[p]$, let $I_q$ take constants in $\Sigma$ to their corresponding version as domain elements. If $w$ is any other world, let $I_w$ coincide with $I_q$. This is necessary to make sure that the model is concordant, because $q$ sees every other world, and is sufficient to see that $\M[p]$ is adequate.
  Finally, given an $n$-ary predicate letter $S$ and a world $w$, define $S^{J_w}$ as the set of $n$-tuples $\la d_0, \hdots, d_{n-1}\ra \subseteq (M_w)^n$ such that $S(d_0, \hdots, d_{n-1}) \in w^+$.
\end{definition}

Since everything up until now was meant for closed formulas, and furthermore we are potentially adding new constants to the signature of the formulas we care about, we provide a way of replacing the free variables of a formula with constants.

\begin{definition}[$\varphi^g$, \cite{QRC1}]
  Given a formula $\varphi$ in a signature $\Sigma$ and a function $g$ from the set of variables to a set of constants in some signature $\Sigma' \supseteq \Sigma$, we define the formula $\varphi^g$ in the signature $\Sigma'$ as $\varphi$ with each free variable $x$ simultaneously replaced by $g(x)$.
\end{definition}

The constant domain model defined above coincides with the non-constant domain definition provided in \cite{QRC1} in everything other than that it refers to the stronger Lemmas~\ref{lem:lindenbaum} and~\ref{lem:pair_existence} that keep the domain constant. Thus the Truth Lemma holds with exactly the same proof.

\begin{lemma}[Truth lemma, \cite{QRC1}]
\label{lem:modal_truth}
Let $\Sigma$ be a signature. For any finite non-empty consistent pair $p$ of closed formulas in the language of $\Sigma$, world $w \in \M[p]$, $w$-assignment $g$, and formula $\varphi$ in the language of $\Sigma$ such that $\varphi^g \in \Cl_{M_w}(p)$, we have that
  \begin{equation*}
    \M[p], w \Vdash^g \varphi
      \iff
      \varphi^g \in w^+
    .
  \end{equation*}
\end{lemma}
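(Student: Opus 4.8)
The plan is to prove the biconditional by induction on the structure of $\varphi$. This is legitimate because the substitution $(\cdot)^g$ does not change logical structure, so for any subformula $\psi$ of $\varphi$ and any assignment $h$ the formula $\psi^h$ is strictly simpler than $\varphi^g$; and since the statement for a fixed $\varphi$ is asserted for \emph{all} worlds $w$ and assignments $g$ with $\varphi^g \in \Cl_{M_w}(p)$, I am free to apply the induction hypothesis to a subformula at a different world and under an $x$-variant assignment. Two bookkeeping facts will be used throughout. First, the closure operator is subformula-transitive ($\alpha \in \Cl_C(\beta)$ implies $\Cl_C(\alpha) \subseteq \Cl_C(\beta)$), which, together with $(\psi \land \chi)^g = \psi^g \land \chi^g$, $(\Diamond\psi)^g = \Diamond(\psi^g)$, and the fact that every domain $M_w$ is the fixed constant set $C$ (so $\Cl_{M_w}(p) = \Cl_C(p)$), ensures that when $\varphi^g \in \Cl_{M_w}(p)$ the relevant images of the immediate subformulas of $\varphi$ again belong to $\Cl_C(p)$. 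Second, every world $w$ of $\M[p]$ is $\Cl_C(p)$-MCW, so $w^+$ and $w^-$ partition $\Cl_C(p)$ and $w$ is consistent and fully witnessed.

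For the base cases: $\top$ holds at every world and, being derivable from $\bigwedge w^+$, belongs to $w^+$ by maximality and consistency; for an atomic $S(t_0, \dots, t_{n-1})$ I would unfold the definition of $S^{J_w}$ from Definition~\ref{def:Mp}, which was arranged precisely so that $\langle g(t_0), \dots, g(t_{n-1})\rangle \in S^{J_w}$ iff $(S(t_0, \dots, t_{n-1}))^g \in w^+$. The conjunction case reduces, via the induction hypothesis applied to $\psi$ and $\chi$ at $w$ under $g$, to showing $\psi^g \land \chi^g \in w^+$ iff both $\psi^g \in w^+$ and $\chi^g \in w^+$, which is immediate from the behaviour of $\land$ under $\vdash$ together with maximality and consistency of $w$.

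The two remaining cases carry the weight. For $\varphi = \Forall{x}\psi$, write $\theta$ for $\psi$ with every free variable other than $x$ replaced by its $g$-value, so that $(\Forall{x}\psi)^g = \Forall{x}\theta$ and, for any $h \xaltern{x} g$ with $h(x) = d \in M_w = C$, we have $\psi^h = \theta\subst{x}{d}$. By the $\Forall{x}$-clause in the definition of $\Cl_C$, each $\theta\subst{x}{d}$ with $d \in C$ lies in $\Cl_C(p)$, so the induction hypothesis yields that $\M[p], w \Vdash^g \Forall{x}\psi$ iff $\theta\subst{x}{d} \in w^+$ for every $d \in C$. It remains to match this with $\Forall{x}\theta \in w^+$: the forward direction is Lemma~\ref{lem:instantiation} together with maximality and consistency; the backward direction is exactly where being fully witnessed is needed, since if $\Forall{x}\theta$ were in $w^-$ there would be a constant $c$ with $\theta\subst{x}{c} \in w^-$, and $c$ must lie in $C$ (as $w^- \subseteq \Cl_C(p)$ mentions only constants from $C$), contradicting $\theta\subst{x}{c} \in w^+$ and consistency. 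For $\varphi = \Diamond\psi$, note that all $\eta$ functions are the identity, so $\M[p], w \Vdash^g \Diamond\psi$ just asks for an $R$-successor $v$ of $w$ with $\M[p], v \Vdash^g \psi$. If $\Diamond(\psi^g) \in w^+$, the iterative construction of $\M[p]$ (an application of Lemma~\ref{lem:pair_existence} to each positive diamond of each world while it is still a leaf) supplies such a successor $v$ with $\psi^g \in v^+$, and the induction hypothesis at $v$ finishes. Conversely, given an $R$-successor $v$ with $\M[p], v \Vdash^g \psi$, the induction hypothesis gives $\psi^g \in v^+$; and if $\Diamond(\psi^g)$ were in $w^-$, then since $wRv$ implies $w\hat{R}v$ (the accessibility relation of $\M[p]$ is the transitive closure of $\hat{R}$-edges and $\hat{R}$ is transitive on consistent pairs by Lemma~\ref{lem:hatR-transitive-irreflexive}), clause~(1) of the definition of $\hat{R}$ would force $\psi^g \in v^-$, contradicting consistency of $v$; hence $\Diamond(\psi^g) \in w^+$.

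The hard part will not be any single case but the bookkeeping that links them: confirming that the side condition $\varphi^g \in \Cl_{M_w}(p)$ really does descend to the subformula images invoked at each step --- in particular reconciling $(\Forall{x}\psi)^g$ with the $C$-indexed union appearing in the closure of a universal formula --- and checking that the witness produced by the fully-witnessed condition is one of the $|C|$ constants of the domain rather than some foreign constant. Once this is pinned down, what remains is the standard maximal-consistent-and-witnessed-set computation, which is precisely the argument of \cite{QRC1}.
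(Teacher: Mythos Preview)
Your proposal is correct and follows precisely the approach the paper invokes: the paper gives no proof here, simply stating that the argument is identical to the one in \cite{QRC1}, which is the standard induction on $\varphi$ using maximality, consistency, full witnessing, and the $\hat{R}$-clauses --- exactly what you have spelled out. Your bookkeeping observations (that $M_w = C$ so $\Cl_{M_w}(p) = \Cl_C(p)$, that $C$ contains all constants of the extended signature so the witness constant lies in the domain, and that $R \subseteq \hat{R}$ via transitivity of $\hat{R}$ on consistent pairs) are the right hygiene checks and match what the constant-domain construction was designed to make automatic.
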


We are now ready to prove the constant domain completeness theorem.

\begin{theorem}[Constant domain completeness]
\label{thm:constant_domain}
  Let $\Sigma$ be a signature and $\varphi, \psi$ formulas in $\Sigma$. If $\varphi \not\vdash \psi$, then there is an adequate, finite, irreflexive, and constant domain model $\M$, a world $w \in W$, and a $w$-assignment $g$ such that:
  \begin{equation*}
    \M, w \Vdash^g \varphi
    \quad \text{and} \quad
    \M, w \not\Vdash^g \psi
    .
  \end{equation*}
\end{theorem}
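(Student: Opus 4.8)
The plan is to reduce the general case to the closed-formula case already handled by the machinery built up in this section, namely Definition~\ref{def:Mp} and the Truth Lemma (Lemma~\ref{lem:modal_truth}). So suppose $\varphi \not\vdash \psi$ in the signature $\Sigma$. The formulas $\varphi$ and $\psi$ may have free variables, so first I would pick a finite set of fresh constants $\{c_x : x \in \fv(\varphi) \cup \fv(\psi)\}$, one for each free variable, not occurring in $\Sigma$, and let $g_0$ be the map sending each such $x$ to its $c_x$. Write $\Sigma'$ for $\Sigma$ extended by these constants. By Lemma~\ref{lem:signatures}, adding constants does not change derivability, so from $\varphi \not\vdash_\Sigma \psi$ we get $\varphi \not\vdash_{\Sigma'} \psi$, and then by the generalization-on-constants rule (Rule~\ref{rule:constants}, applied finitely many times, or more precisely its contrapositive) we obtain $\varphi^{g_0} \not\vdash_{\Sigma'} \psi^{g_0}$, where now $\varphi^{g_0}$ and $\psi^{g_0}$ are closed formulas in $\Sigma'$.

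Next I would set $p := \pair{\{\varphi^{g_0}\}}{\{\psi^{g_0}\}}$. This is a pair of closed formulas in $\Sigma'$, its positive part is a singleton, and it is consistent precisely because $\varphi^{g_0} \not\vdash \psi^{g_0}$; it is also trivially closed. Thus $p$ satisfies all the hypotheses needed to invoke Definition~\ref{def:Mp}, yielding an adequate model $\M[p]$. By construction this model is finite (the iterative frame construction terminates, as argued after Definition~\ref{def:Mp}), it has constant domain $C$ at every world, and it is irreflexive because $\F[p]$ is the transitive closure of a frame built along strictly decreasing modal depth, so no world sees itself — alternatively this follows from Lemma~\ref{lem:hatR-transitive-irreflexive} together with the fact that the accessibility relation is a subrelation of $\hat R$ on consistent pairs. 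Let $w := q$ be the root of $\M[p]$, the $\Cl_C(\Phi)$-MCW pair extending $p$ given by Lemma~\ref{lem:lindenbaum}; then $\varphi^{g_0} \in w^+$ and $\psi^{g_0} \in w^-$, and since $w$ is $\Cl_C(\Phi)$-maximal consistent, $\psi^{g_0} \notin w^+$.

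Now I would apply the Truth Lemma at the root. The assignment I use is $g_0$ itself (extended arbitrarily on the variables not free in $\varphi$ or $\psi$, which is harmless since truth of a formula depends only on the assignment's values on its free variables), viewed as a $w$-assignment into $M_w = C$ via the identification of the fresh constants with their domain elements; note $\varphi^{g_0}$ is exactly $\varphi$ with its free variables replaced according to $g_0$ in the sense of the $\varphi^g$ notation, and $\varphi^{g_0} \in p^+ \subseteq \Cl_{M_w}(p)$, similarly for $\psi^{g_0} \in p^- \subseteq \Cl_{M_w}(p)$. Hence Lemma~\ref{lem:modal_truth} gives $\M[p], w \Vdash^{g_0} \varphi$ iff $\varphi^{g_0} \in w^+$, which holds, and $\M[p], w \Vdash^{g_0} \psi$ iff $\psi^{g_0} \in w^+$, which fails. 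So $\M[p], w \Vdash^{g_0} \varphi$ and $\M[p], w \not\Vdash^{g_0} \psi$. Finally, $\M[p]$ is a model in the signature $\Sigma'$, but restricting the interpretations $I_w$, $J_w$ to the symbols of $\Sigma$ (equivalently, simply regarding $\M[p]$ as a $\Sigma$-model and ignoring the extra constants) yields an adequate, finite, irreflexive, constant domain $\Sigma$-model with the same satisfaction behaviour on $\Sigma$-formulas, which completes the proof.

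I expect the only genuinely delicate point to be bookkeeping: making sure the free-variable-to-constant substitution $\varphi \mapsto \varphi^{g_0}$ matches up with both the $\subst{x}{c}$ notation used in Rule~\ref{rule:constants} and the $\varphi^g$ notation used in the Truth Lemma, and that the chosen constants are genuinely new so that Rule~\ref{rule:constants} applies. Everything else is a direct citation of the lemmas already proved — in particular the finiteness, irreflexivity, constant-domain, and adequacy properties of $\M[p]$ are all built into Definition~\ref{def:Mp} and the results preceding it, so no real work remains there.
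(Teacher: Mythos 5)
Your proposal is correct and follows essentially the same route as the paper's own proof: extend the signature with fresh constants for the free variables, pass to $\varphi^{g} \not\vdash \psi^{g}$ via Lemma~\ref{lem:signatures} and Rule~\ref{rule:constants}, build $\M[\pair{\{\varphi^{g}\}}{\{\psi^{g}\}}]$ by Definition~\ref{def:Mp}, and conclude with the Truth Lemma at the root. The extra details you supply (irreflexivity via Lemma~\ref{lem:hatR-transitive-irreflexive}, the final restriction back to $\Sigma$) are correct but not needed beyond what the paper already records.
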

\begin{proof}
  As in the original proof of relational completeness presented in \cite{QRC1}, define a new constant $c_x$ for each free variable $x$ of $\varphi$ and $\psi$ and let $\Sigma'$ be the signature $\Sigma$ augmented with these new constants and a dummy constant $c_0$.
  Let $g$ be the assignment taking each free variable $x$ of $\varphi$ and $\psi$ to $c_x$ and every other variable to $c_0$.
  Note that $\varphi^g \not\vdash_{\Sigma'} \psi^g$ by Rule~\ref{rule:constants} and Lemma~\ref{lem:signatures}.
  Build $\M := \M[\pair{\{\varphi^g\}}{\{\psi^g\}}]$ as described in Definition~\ref{def:Mp}, with root $w$. Then by Lemma~\ref{lem:modal_truth} we have both $\M, w \Vdash^g \varphi$ and $\M, w \not\Vdash^g \psi$, as desired.
\end{proof}

\section{The strictly positive fragment of \texorpdfstring{$\QKf$}{QK4} and \texorpdfstring{$\QGL$}{QGL}}
\label{sec:QGL}

Consider the language of full quantified modal logic with no equality nor constant nor function symbols, $\Lbf$. We use upper case letters to refer to formulas in this language. A propositional modal logic $\S$ can be extend to a quantified modal logic $\QS$ as described in \cite{HughesCresswell1996} by extending the language and adding the following axiom schema and rule:
\begin{itemize}
  \myitem[$\forall \mathsf{E}$] $[\forall \mathsf{E}]$
    $\Forall{x} A \to A\subst{x}{y}$ ($y$ free for $x$ in $A$);\label{ax:QforallE}
  \myitem[$\forall \mathsf{I}$] $[\forall \mathsf{I}]$
    if $\QS \vdash A \to B$, then $\QS \vdash A \to \Forall{x} B$ ($x \notin \fv(A)$).\label{rule:QforallI}
\end{itemize}

The logic $\Kf$ is obtained from the smallest normal modal logic $\mathsf{K}$ by adding the 4 axiom $\Box A \to \Box \Box A$. The logic $\gl$ is $\Kf$ extended with Löb's axiom $\Box(\Box A \to A) \to \Box A$. Dashkov \cite{Dashkov2012} showed that $\RC_1$ is the strictly positive fragment of both $\Kf$ and $\gl$ (and of any logic between them), in the sense that, if $\varphi$ and $\psi$ are built up from $\top$, propositional symbols, conjunctions, and diamonds, then $\varphi \vdash_{\RC_1} \psi$ if and only if $\Kf \vdash \varphi \to \psi$, and similarly for $\gl$. In this section we show that this equivalence is maintained in the predicate case.

Since $\QRC$ includes constants in its language and predicate modal logics are often presented without them, we define a map from the language of $\QRC$ to its constant-free subset, replacing constants with fresh variables. Thus, given a finite set of $\QRC$ formulas $\Phi$ where the constants appearing in $\Phi$ are $\vec{c} = c_0, \hdots, c_{n-1}$, let $\vec{x} = x_0, \hdots, x_{n-1}$ be fresh variables with respect to $\Phi$. Then we define $\varphi^\Phi := \varphi\subst{c_0}{x_0}\cdots\subst{c_{n-1}}{x_{n-1}}$ for each $\varphi \in \Phi$, and abbreviate this long substitution with the notation $\varphi\subst{\vec c}{\vec x}$. Thus we also have $\varphi = \varphi^\Phi\subst{\vec x}{\vec c}$.

The above translation is harmless when it comes to provability, as shown by the following lemma.
\begin{lemma}
\label{lem:noconstants}
  Let $\Phi$ be a finite set of $\QRC$ formulas such that $\varphi, \psi \in \Phi$. Then:
  \begin{equation*}
    \varphi \vdash \psi
    \iff
    \varphi^\Phi \vdash \psi^\Phi
    .
  \end{equation*}
\end{lemma}
\begin{proof}
  If $\varphi \vdash \psi$, then $\varphi^\Phi\subst{\vec x}{\vec c} \vdash \psi^\Phi\subst{\vec x}{\vec c}$, and since no constant appears in either $\varphi^\Phi$ nor $\psi^\Phi$, we can iterate Rule~\ref{rule:constants} to obtain $\varphi^\Phi \vdash \psi^\Phi$.
  If, on the other hand, we have $\varphi^\Phi \vdash \psi^\Phi$, then we can repeatedly use Rule~\ref{rule:term_instantiation} to obtain $\varphi^\Phi\subst{\vec x}{\vec c} \vdash \psi^\Phi\subst{\vec x}{\vec c}$, which is $\varphi \vdash \psi$.
\end{proof}

The semantics presented here for $\QRC$ is a generalization of more traditional semantics for quantified modal logics, since it uses maps between domains. However, for this section, we will only need constant domain models. These can be easily interpreted in the usual framework as described in \cite{HughesCresswell1996}. The following is a well-known (and easily provable) result.

\begin{lemma}[Soundness for $\QS$, \cite{HughesCresswell1996}]
\label{lem:QSsoundness}
  If $\F = \la W, R \ra$ satisfies every theorem of $\S$, then any constant domain model based on $\F$ satisfies every theorem of $\QS$.
\end{lemma}

Thus any transitive constant domain model is a $\QKf$ model, and any transitive and conversely well-founded constant domain model is a $\QGL$ model.\footnote{We assume constant domain models for simplicity, but these results also hold in the inclusive case.}

We are now ready to show the main theorem of this section.

\begin{theorem}
\label{thm:positive_fragment}
  Let $\varphi$ and $\psi$ be $\QRC$ formulas and let $\QS$ be any logic between $\QKf$ and $\QGL$. Then $\varphi \vdash \psi$ if and only if there is a finite set $\Phi$ such that $\varphi, \psi \in \Phi$ and $\QS \vdash \varphi^\Phi \to \psi^\Phi$.
\end{theorem}
\begin{proof}
  For the left-to-right implication, let $\Phi$ be the set of formulas appearing in the proof of $\varphi \vdash \psi$. We proceed to show $\QKf \vdash \varphi^\Phi \to \psi^\Phi$ (implying $\QS \vdash \varphi^\Phi \to \psi^\Phi$) by induction on the length of the $\QRC$ proof. The axioms $\varphi \vdash \top$, $\varphi \vdash \varphi$, and $\varphi \land \psi \vdash \varphi$ and the conjunction introduction and cut rules are straightforward.

  For the $\QRC$ necessitation rule, assume that $\varphi \vdash \psi$, which by the induction hypothesis gives us $\QKf \vdash \varphi^\Phi \to \psi^\Phi$. Then by taking the contrapositive and then applying the $\QKf$ necessitation rule, we obtain $\QKf \vdash \Box (\neg \psi^\Phi \to \neg \varphi^\Phi)$. This implies $\QKf \vdash \Box \neg \psi^\Phi \to \Box \neg \varphi^\Phi$, and taking the contrapositive again gives us $\QKf \vdash \Diamond \varphi^\Phi \to \Diamond \psi^\Phi$, as desired.

  The translation of the $\QRC$ transitivity axiom, $\Diamond \Diamond \varphi^\Phi \to \Diamond \varphi^\Phi$, is just the contrapositive of an instance of the $4$ axiom $\Box A \to \Box \Box A$.

  We turn to the $\forall$-introduction rule on the right. If $\varphi \vdash \psi$ and $x \notin \fv(\varphi)$, then by the induction hypothesis $\QKf \vdash \varphi^\Phi \to \psi^\Phi$ and we can be assured that $x \notin \fv(\varphi^\Phi)$ because all the new variables introduced by the $\cdot^\Phi$ translation are fresh with respect to $\Forall{x} \psi$, and are consequently different from $x$. Then $\QKf \vdash \varphi^\Phi \to \Forall{x} \psi^\Phi$ by Rule~\ref{rule:QforallI}.

  In the case of the $\forall$-introduction rule on the left, suppose that $\varphi\subst{x}{t} \vdash \psi$, with $t$ free for $x$ in $\varphi$.
  Defining $t^\Phi$ as either $t$ itself (when $t$ is a variable), or whichever variable replaces $t$ by the $\cdot^\Phi$ translation (when $t$ is a constant), observe that $(\varphi\subst{x}{t})^\Phi$ is $\varphi^\Phi\subst{x}{t^\Phi}$.
  Thus we have $\QKf \vdash \varphi^\Phi\subst{x}{t^\Phi} \to \psi^\Phi$ by the induction hypothesis, and we wish to show $\QKf \vdash \Forall{x} \varphi^\Phi \to \psi^\Phi$. 
  We know $\QKf \vdash \Forall{x} \varphi^\Phi \to \varphi^\Phi\subst{x}{t^\Phi}$ by Axiom~\ref{ax:QforallE}, and so we are done.

  For the term instantiation rule, suppose that $\varphi \vdash \psi$ and let $x$ and $t$ be such that $t$ is free for $x$ in both $\varphi$ and $\psi$. By the induction hypothesis we have established $\QKf \vdash \varphi^\Phi \to \psi^\Phi$. Using Rule~\ref{rule:QforallI} with $\top$ as the antecedent, we obtain $\QKf \vdash \Forall{x} (\varphi^\Phi \to \psi^\Phi)$, and then we may conclude $\QKf \vdash \varphi^\Phi\subst{x}{t^\Phi} \to \psi^\Phi\subst{x}{t^\Phi}$ by Axiom~\ref{ax:QforallE}, as desired.

  Finally, in the case of the constant elimination rule, if $\varphi\subst{x}{c} \vdash \psi\subst{x}{c}$, then $\QKf \vdash \varphi^\Phi\subst{x}{c^\Phi} \to \psi^\Phi\subst{x}{c^\Phi}$ by the induction hypothesis. Since $c^\Phi$ is a variable, we can generalize it through Rule~\ref{rule:QforallI}, obtaining $\QKf \vdash \Forall{c^\Phi} (\varphi^\Phi\subst{x}{c^\Phi} \to \psi^\Phi\subst{x}{c^\Phi})$. We then use Axiom~\ref{ax:QforallE} to instantiate $c^\Phi$ with $x$, obtaining $\QKf \vdash \varphi^\Phi\subst{x}{c^\Phi}\subst{c^\Phi}{x} \to \psi^\Phi\subst{x}{c^\Phi}\subst{c^\Phi}{x}$.
  Since $c$ does not appear in either $\varphi$ nor $\psi$, we also know that $c^\Phi$ does not appear in either $\varphi^\Phi$ nor $\psi^\Phi$, and so $\varphi^\Phi\subst{x}{c^\Phi}\subst{c^\Phi}{x} = \varphi^\Phi$, and similarly for $\psi$. We conclude that $\QKf \vdash \varphi^\Phi \to \psi^\Phi$, as desired.

  For the right-to-left implication, we prove the contrapositive. If $\varphi \not\vdash \psi$, then by Lemma~\ref{lem:noconstants} we know that $\varphi^\Phi \not\vdash \psi^\Phi$ for any $\Phi$ including both $\varphi$ and $\psi$. Let $\M$ be a $\QRC$ model satisfying $\varphi^\Phi$ and not satisfying $\psi^\Phi$, as given by Theorem~\ref{thm:constant_domain}. This model is transitive, irreflexive, and has finitely-many worlds, which means it is a $\QGL$ model by Lemma~\ref{lem:QSsoundness}. We conclude that $\QGL \not\vdash \varphi^\Phi \to \psi^\Phi$, and consequently that $\QS \not\vdash \varphi^\Phi \to \psi^\Phi$.
\end{proof}

We end with the following observation. The proof of Theorem~\ref{thm:positive_fragment} holds for any quantified modal logic that extends $\QKf$ and is sound for any combination of constant-domain, finite, transitive and irreflexive models. The Barcan Formula ($\mathsf{BF}$), $\Forall{x} \Box A \to \Box \Forall{x} A$, is popular in the world of quantified modal logic and is always sound in constant-domain models. Thus the proof of Theorem~\ref{thm:positive_fragment} also serves to see that, for example, $\varphi \vdash \psi$ if and only if $\QKf + \mathsf{BF} \vdash \varphi^\Phi \to \psi^\Phi$.
  This is curious because $\mathsf{BF}$ does not hold in $\QGL$ and is in fact unsound with the usual provability interpretation of $\Box$.
  From this we conclude that it would be worthwhile to study $\QRC$ from points of view unrelated to provability.

\section{Arithmetical semantics}
\label{sec:arithmetics}

Recall that the language of arithmetic is that of first-order logic together with the symbols $\{ 0, 1, +, \times, \leq, = \}$ with their usual arities (see \cite{HajekPudlak:1993:Metamathematics} for details). We will rely on $\Sigma^0_1$ collection throughout the sections on arithmetic, so we take $\isig{1}$ as our base theory.

Let $T$ be an elementary presented extension of $\isig{1}$, meaning that there is a bounded formula $\Ax_T(u)$ that is true in the standard model if and only if $u$ is (the Gödel number of) an axiom of $T$. In this setting it is traditional to define Gödel's provability predicate $\Box_T \varphi$ as $\Exists{p} \Prf_T(p, \gnum{\varphi})$, where
\begin{align*}
  \Prf_T(p, n) :=
    \text{sequence}(p)
    &\land p_{|p|-1} = n \land {}\\
    \Forall{k < |p|} &(
      \Ax_T(p_k)
      \lor \Exists{i, j < k} \text{MP}(p_i, p_j, p_k)
      \lor \Exists{i < k} \text{Gen}(p_i, p_k)
    )
    .
\end{align*}

Roughly, $\Box_T \varphi$ formalizes that there is a Hilbert-style proof of $\varphi$, that is, a finite sequence $p_0, \hdots, p_{m}$ such that $p_{m}$ is $\varphi$ and that each $p_k$ is either (the Gödel number of) an axiom of $T$ or follows from previous elements of the sequence through either \emph{modus ponens} or generalization.

Note that all the functions used in the definition of $\Prf_T$ can be naturally defined as bounded formulas in $\isig{1}$, and thus $\Prf_T$ is itself a bounded formula. This means that $\Box_T \varphi$ is $\Sigma^0_1$. We write $\Diamond_T \varphi$ as shorthand for $\neg \Box_T \neg \varphi$.

In what follows we will be interested in c.e.~theories, which are theories whose axioms can be defined by a $\Sigma^0_1$ formula. It is known by Craig's Trick \cite{Craig1953, Feferman1960} that any such theory has an equivalent elementary presentation, allowing us to use the regular definition of $\Box_T$. However, we will work with specific axiomatizations and thus it is sometimes more convenient to allow $\Prf$ to be a $\Sigma^0_1$ formula. For a given $\Sigma^0_1$ axiomatization $\tau$ of $T$, we define $\Box_\tau \varphi := \Exists{p} \Prf_\tau(p, \gnum{\varphi})$, where
\begin{align*}
  \Prf_\tau(p, n) :=
    \text{sequence}(p)
    &\land p_{|p|-1} = n \land {}\\
    \Forall{k < |p|} &(
      \tau(p_k)
      \lor \Exists{i, j < k} \text{MP}(p_i, p_j, p_k)
      \lor \Exists{i < k} \text{Gen}(p_i, p_k)
    )
    ,
\end{align*}
the only difference from $\Prf_T$ being that we use the $\Sigma^0_1$ formula $\tau$ instead of the bounded $\Ax_T$.
Thus $\Prf_\tau$ is equivalent to a $\Sigma^0_1$ formula (provably in $\isig{1}$), and so $\Box_\tau \varphi$ is still $\Sigma^0_1$. We define $\Diamond_\tau \varphi$ as $\neg \Box_\tau \neg \varphi$, as usual.

We wish to interpret the strictly positive formulas in the language of $\QRC$ as parameterized axiomatizations of arithmetical theories extending $\isig{1}$. Let $\tau$ be a bounded axiomatization of a sound c.e.~base theory $T$ extending $\isig{1}$.

  A realization $\cdot\is$ interprets each $n$-ary relation symbol $S(\vec{x}, \vec{c})$ as an $(n+1)$-ary $\Sigma^0_1$ formula $\sigma(u, \vec{y},\allowbreak \vec{z})$ in the language of arithmetic, where $\vec{y}$ matches with $\vec{x}$ and $\vec{z}$ matches with $\vec{c}$.\footnote{In the sections on arithmetic, we always use $x$ for variables of $\QRC$ and $y, z, u$ for variables of arithmetic. Furthermore, $u$ is reserved for the Gödel numbers of axioms of theories.}
  We then extend this notion to any formula as follows (we add $\tau(u)$ to the axioms of the interpretation of any relation symbol to guarantee that every theory is an extension of $T$).
    
\begin{definition}[\cite{QRC1}]
\label{def:isT}

Let $\cdot\is$ be a realization such that, for a given $n$-ary predicate $S$ and terms $\vec t$, $S(\vec t)\is$ is an $(n+1)$-ary $\Sigma^0_1$ arithmetical formula where modal variables $x_k$ are interpreted as $y_k$ and modal constants $c_k$ are interpreted as $z_k$. We extend this realization to $\QRC$ formulas as follows:
  \begin{itemize}
    \item $\top\isT := \tau(u)$;
    \item $S(\vec{x}, \vec{c})\isT : = S(\vec{x},\vec{c})\is \lor \tau(u)$;
    \item $(\psi \land \delta)\isT := \psi\isT \lor \delta\isT$;
    \item $(\lozenge \psi)\isT := \tau(u) \lor (u = \gnum{\Diamond_{\psi\isT} \top})$;
    \item $(\Forall{x_i} \psi)\isT := \Exists{y_i} \psi\isT$.
  \end{itemize}
\end{definition}
We remark that, if the free variables of $\varphi$ are $x_0, \hdots, x_{n-1}$ and the constants appearing in $\varphi$ are $c_0, \hdots, c_{m-1}$, then the free variables of $\varphi\isT$ are $u, y_0, \hdots, y_{n-1}, z_0, \hdots, z_{m-1}$.
For example, if $S(x_0, c_1)\isT = \sigma(u, y_0, z_1) \lor \tau(u)$, then
\begin{equation*}
  (\Diamond S(x_0, c_1))\isT =
    \tau(u) \lor (u = \gnum{\Diamond_{\sigma(u, \dot{y_0}, \dot{z_1}) \lor \tau(u)} \top})
  .
\end{equation*}
The dotted variables $\dot{y_0}$ and $\dot{z_1}$ appearing in the expression $u = \gnum{\Diamond_{\sigma(u, \dot{y_0}, \dot{z_1}) \lor \tau(u)} \top}$ indicate that $y_0$ and $z_1$ are free variables of this expression that, upon being instantiated by natural numbers $n$ and $m$, shall be replaced by the numerals $\bar{n}$ and $\bar{m}$ instead.

Since we have $\Sigma^0_1$ collection, $\varphi\isT$ is always provably equivalent to a $\Sigma^0_1$ formula, and it represents the axiomatization of an extension of $T$.

We briefly inspect our intuitions regarding this arithmetic interpretation and the interaction between $\forall$ and $\Diamond$. As mentioned in Lemma~\ref{lem:QRC1consequences}, the sequent $\Diamond \Forall{x} \varphi \vdash \Forall{x} \Diamond \varphi$ is provable in $\QRC$. In the arithmetical reading it says that if the union of many theories (the $\Forall{x} \varphi$ part) is consistent, then so is each individual theory. The converse, on the other hand, must not be a consequence of $\QRC$, for it is not in general the case that the union of many consistent theories is itself consistent.

We can now define $\QRCt(T)$:
  \begin{equation*} 
  \QRCt(T) := \{\varphi(\vec{x}, \vec{c}) \vdash \psi(\vec{x}, \vec{c}) \mid
    \Forall{\cdot\is} 
    T \vdash \Forall{\theta} \Forall{\vec{y}, \vec{z}} (\Box_{\psi\isT} \theta \to \Box_{\varphi\isT} \theta)
  \}
  ,
  \end{equation*}
where $\theta$ is a closed formula and $\varphi\isT, \psi\isT$ in general depend on $\vec{y}$ and $\vec{z}$.

 We will show that
 \begin{equation*}
   \QRC = \QRCt(T)
 \end{equation*}
 for any sound c.e.~theory $T$ extending $\isig{1}$. The left-to-right inclusion was already proved in \cite{QRC1} for $\isig{1}$, and the same proof goes through for any such $T$. The most notable part of the proof is the case of Rule~\ref{rule:forallR} ($\forall$ introduction on the right), which uses $\Sigma^0_1$ collection. We go into detail on the very similar proof of Theorem~\ref{thm:HAsoundness}.

\begin{theorem}[Arithmetical soundness, \cite{QRC1}]
$\QRC \subseteq \QRCt(T)$.
\end{theorem}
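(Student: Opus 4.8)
The plan is to fix an arbitrary realization $\cdot^\star$ and argue by induction on the derivation of $\varphi \vdash \psi$ in $\QRC$; since realizations are metatheoretic objects, fixing one is harmless, and each $\QRC$-derivation then yields a concrete $\isig{1}$-proof. For an axiom-formula $\alpha(u, \vec{y}, \vec{z})$ write $T_\alpha$ for the theory it axiomatizes, and abbreviate by $\alpha \preceq \beta$ the statement that $\isig{1}$ proves that $T_\beta$ proves every theorem of $T_\alpha$, i.e.\ $\isig{1} \vdash \Forall{\theta}\Forall{\vec{y}}\Forall{\vec{z}} (\Box_\alpha\theta \to \Box_\beta\theta)$, the parameters $\vec{y}, \vec{z}$ being carried along throughout. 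With this notation the inductive goal at each step is exactly $\psi^\star \preceq \varphi^\star$, which is precisely the condition for $\varphi \vdash \psi \in \aQRC$.

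The engine of the argument is a short toolkit of facts, each a routine formalization in $\isig{1}$ of an elementary proof-theoretic observation: \textbf{(a)} \emph{monotonicity}: if $\isig{1} \vdash \Forall{u}(\alpha \to \beta)$ then $\alpha \preceq \beta$ (a proof from $\alpha$-axioms is a proof from $\beta$-axioms); \textbf{(b)} \emph{every realization extends the base}: $\isig{1} \vdash \Forall{u}(\tauisig(u) \to \chi^\star)$ for every $\chi$, by an easy induction on $\chi$, whence $\tauisig \preceq \chi^\star$ and, since $\isig{1}$ proves its own $\Sigma_1$-completeness, provable $\Sigma_1$-completeness for $\Box_{\chi^\star}$; \textbf{(c)} the \emph{formalized deduction theorem} together with formalized closure under modus ponens, which reduce $\tauisig(u) \vee (u {=} \gnum{\lambda}) \preceq \beta$ (for $T_\beta$ provably extending $\isig{1}$) to $\isig{1} \vdash \Box_\beta\gnum{\lambda}$; \textbf{(d)} a \emph{union principle}: if $\alpha \preceq \gamma$ and $\beta \preceq \gamma$ then $\alpha \vee \beta \preceq \gamma$, proved by reasoning in $\isig{1}$ about a $T_{\alpha \vee \beta}$-proof, sorting the finitely many axioms it uses into an $\alpha$-part and a $\beta$-part, and recombining via \textbf{(c)}; and \textbf{(e)} the analogue with a quantifier: if $\isig{1} \vdash \Forall{y}\Forall{\theta}(\Box_\alpha\theta \to \Box_\gamma\theta)$ with $y \notin \fv(\gamma)$, then $\Exists{y}\alpha \preceq \gamma$, now re-witnessing each axiom of a $T_{\exists y\,\alpha}$-proof individually. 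I would also invoke the substitution lemma $(\chi\subst{x}{t})^\star = \chi^\star\subst{y_x}{v_t}$, where $v_t$ is the arithmetical variable matching $t$ and no capture occurs since $t$ is free for $x$ in $\chi$.

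With the toolkit in hand the case analysis is mechanical. The axioms $\varphi \vdash \top$ and $\varphi \vdash \varphi$ are \textbf{(b)} and reflexivity of $\preceq$; the $\land$-projections are \textbf{(a)} applied to $\chi^\star \to \chi^\star \vee \delta^\star$; $\land$-introduction is \textbf{(d)} with $\gamma := \varphi^\star$; cut is transitivity of $\preceq$; the necessitation rule, using $T_{(\Diamond\chi)^\star} = \isig{1} + \neg\Box_{\chi^\star}\bot$, reduces via \textbf{(c)} and \textbf{(a)} to the induction hypothesis instantiated at $\theta := \bot$; the transitivity axiom $\Diamond\Diamond\varphi \vdash \Diamond\varphi$ is the only genuinely modal case, where provable $\Sigma_1$-completeness (from \textbf{(b)}) gives $\isig{1} \vdash \Box_{\varphi^\star}\bot \to \Box_{(\Diamond\varphi)^\star}\gnum{\Box_{\varphi^\star}\bot}$ while $\neg\Box_{\varphi^\star}\bot$ is an axiom of $T_{(\Diamond\varphi)^\star}$, so the two together force $\Box_{(\Diamond\varphi)^\star}\bot$, which is what \textbf{(c)} needs; the $\Forall{x}$-right rule (with $x \notin \fv(\varphi)$) is \textbf{(e)}; $\Forall{x}$-left and term instantiation are \textbf{(a)} with the substitution lemma and instantiation of the arithmetical $\forall$-quantifiers in the hypothesis; and the generalization-on-constants rule is a bound-variable argument: since $c$ occurs in neither $\varphi$ nor $\psi$, the variable $v_c$ assigned by $\cdot^\star$ to $c$ is fresh for $\varphi^\star$ and $\psi^\star$, so the induction hypothesis for $\varphi\subst{x}{c} \vdash \psi\subst{x}{c}$ --- a universally quantified statement in $v_c$ --- becomes, after renaming $v_c$ to $y_x$, precisely the goal.

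The main obstacle is the faithful formalization of the union principles \textbf{(d)} and \textbf{(e)} inside $\isig{1}$: one must internalize the deduction theorem, argue by $\Sigma_1$-induction that the finitely many axioms occurring in a proof can be enumerated, rebucketed and (for \textbf{(e)}) individually re-witnessed, and then reassemble a new proof from the recombined conjunctions. This is well-trodden but bookkeeping-heavy; everything else, including the transitivity axiom once provable $\Sigma_1$-completeness is granted, is elementary.
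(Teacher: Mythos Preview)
The paper does not prove this theorem; it simply cites \cite{QRC1} and moves on, so there is no in-paper argument to compare against. Your proposal is the standard route to arithmetical soundness for strictly positive calculi and, as far as can be checked here, is correct: the induction on $\QRC$-derivations, the toolkit \textbf{(a)}--\textbf{(e)}, and the case-by-case treatment (in particular the handling of the transitivity axiom via provable $\Sigma_1$-completeness and of $\forall$-right via the existential union principle \textbf{(e)}) are exactly what the proof in \cite{QRC1} does. The one genuine formalization burden you identify --- that \textbf{(d)} and \textbf{(e)} need $\Sigma_1$-collection inside $\isig{1}$ to bound the finitely many reconstructed subproofs --- is available, since $\isig{1}$ proves $B\Sigma_1$.
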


We dedicate the next section to the proof of the other inclusion.

\section{Arithmetical completeness}
\label{sec:arithmetical_completeness}

The proof of arithmetical completeness closely follows the proof of Solovay's Theorem found in \cite{Boolos:1993:LogicOfProvability}. The arithmetical semantics used there is different from the one presented in the previous section, allowing only for finite extensions of the base theory instead of arbitrary c.e.~ones. This is not a problem because finite axiomatizations are enough to show completeness. We use $\cdot\is$ in the general setting and $\cdot\fs$ in the finite one. The base theory $T$ considered here is any sound c.e.~extension of $\isig{1}$. Note that we use the bounded $\Prf_T$ formula in the definition of $\cdot\fsT$ (depending on an elementary axiomatization $\tau$ of $T$), as otherwise Solovay's proof wouldn't go through.

\begin{definition}[$\cdot\fsT$]
\label{def:fsT}

Let $\cdot\fs$ be a realization such that, for a given predicate $S$ and terms $\vec t$, $S(\vec t)\fs$ is an arithmetical formula with the same arity as $S$ where modal variables $x_k$ are interpreted as $y_k$ and modal constants $c_k$ are interpreted as $z_k$. We extend this realization to $\QRC$ formulas as follows:
\begin{itemize}
  \item $\top\fsT := \top$;
  \item $S(\vec{t})\fsT := S(\vec t)\fs$;
  \item $(\varphi \land \psi)\fsT := \varphi\fsT \land \psi\fsT$;
  \item $(\Diamond \varphi)\fsT := \Diamond_T \varphi\fsT$;
  \item $(\Forall{x_k} \varphi)\fsT := \Forall{y_k} \varphi\fsT$.
\end{itemize}
\end{definition}

The idea of Solovay's proof is to embed a Kripke model not satisfying the desired unprovable formula in the language of arithmetic. If the embedding is done correctly, it is possible to prove that a formula is satisfied at a world of the Kripke model exactly when it is a consequence of the representation of that world in the desired theory $T$.

Given two strictly positive formulas $\varphi$ and $\psi$ such that $\varphi \not\vdash \psi$ in $\QRC$, let $\M_{\varphi, \psi}$ be a finite, irreflexive, and constant domain adequate model satisfying $\varphi$ and not satisfying $\psi$ at the root $1$ under a $1$-assignment $g_{\varphi, \psi}$. This model and assignment exist by Theorem~\ref{thm:constant_domain}. Since $\M_{\varphi, \psi}$ has constant domain, we refer to $g_{\varphi, \psi}$ and any other $i$-assignments as just assignments, omitting the relevant world.

We assume that the worlds of $\M_{\varphi, \psi}$ are $W = \{ 1, 2, \ldots , N \}$, where $1$ is the root. We define a new (adequate) model $\M$ that is a copy of $\M_{\varphi, \psi}$, except that it has an extra world $0$ as the new root. This world $0$ is connected to all the other worlds through $R$ and has the same domain, constant interpretation, and relational symbol interpretation as $1$. The functions $\eta_{0, i}$ with $0 < i \leq N$ are all defined as the identity function.

Let $\lambda_i$ be the Solovay sentences for $T$ as defined in \cite{Boolos:1993:LogicOfProvability}, satisfying the following Embedding Lemma.

\begin{lemma}[Embedding, \cite{Boolos:1993:LogicOfProvability}]
\label{lem:embedding}
\
\begin{enumerate}[label=\upshape(\arabic*),ref=\thetheorem.(\arabic*)]
  \item\label{embedding:Lambda0istrue}
    $\N \vDash \lambda_0$;

  \item\label{embedding:SomeOfAll}
    $T \vdash \bigvee_{i \leq N} \lambda_i$;

  \item\label{embedding:OneOfAll}
    $T \vdash \lambda_i \to \bigwedge_{j \leq N, j \neq i} \neg \lambda_j$, for $i \leq N$;

  \item\label{embedding:Diamond}
    $T \vdash \lambda_i \to \bigwedge_{j \leq N, iRj} \Diamond_T \lambda_j$, for $i \leq N$;

  \item\label{embedding:Box}
    $T \vdash \lambda_i \to  \Box_T \bigvee_{j \leq N, iRj} \lambda_j$, for $0 < i \leq N$.
\end{enumerate}
\end{lemma}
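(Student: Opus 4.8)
The plan is to observe that this is precisely the classical Solovay embedding of \cite{Boolos:1993:LogicOfProvability}: although $\M$ carries domains, constant interpretations, and relational interpretations, none of these play any role in the statement, which concerns only the finite, transitive, and irreflexive frame on the worlds $0, 1, \dots, N$ together with the accessibility relation $R'$ of $\M$, each $\lambda_i$ being an ordinary arithmetical sentence. So I would first introduce the Solovay function. By the recursion theorem, used in tandem with the diagonal lemma that defines the sentences $\lambda_i$, fix a primitive recursive $h \colon \N \to \{0, \dots, N\}$ with $h(0) := 0$ and, for $n \geq 0$, $h(n+1) := j$ for the least $j$ such that $h(n)\, R'\, j$ and some $T$-proof of $\neg \lambda_j$ has code $\leq n$ (and $h(n+1) := h(n)$ if there is no such $j$); then set $\lambda_i := (\lim_m h(m) = i)$, i.e.\ $\exists n\, \forall m \geq n\ h(m) = i$. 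Since $R'$ is transitive and irreflexive on a finite set, $h$ moves along $R'$ and so changes value at most $N$ times; this, together with every other elementary fact about $h$ used below, is provable already in $\isig{1}$, hence in $T$.

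From the stabilization of $h$ I would read off clause~\ref{embedding:SomeOfAll} --- $T$ proves $\exists n\, \forall m \geq n\ h(m) = h(n)$ and $h$ has range in $\{0, \dots, N\}$ --- and clause~\ref{embedding:OneOfAll}, since the limit of $h$, when it exists, is unique. For clause~\ref{embedding:Lambda0istrue} I would use the soundness of $T$: were $h$ not constantly $0$ in the standard model then, as $0$ is the root and $R'$ admits no cycles, $h$ would settle at some $\ell \neq 0$; but $h$ can enter $\ell$ only because a genuine $T$-proof of $\neg \lambda_\ell$ exists, so $T \vdash \neg \lambda_\ell$ and hence $\N \vDash \neg \lambda_\ell$ by soundness, contradicting $\lim_m h(m) = \ell$. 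Therefore $\lim_m h(m) = 0$ and $\N \vDash \lambda_0$.

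The substance is in clauses~\ref{embedding:Diamond} and~\ref{embedding:Box}, where the self-reference built into $h$ does the work. For~\ref{embedding:Diamond}, fix $iR'j$ and reason inside $T$: from $\lambda_i$ together with, for contradiction, $\Box_\tau \neg \lambda_j$, take a $T$-proof of $\neg \lambda_j$; at any stage $n$ past both the stabilization point of $h$ at $i$ and the code of that proof, the defining clause for $h$ forces $h(n+1)$ to be an $R'$-successor of $i$, contradicting $\lambda_i$; hence $T \vdash \lambda_i \to \Diamond_\tau \lambda_j$. For~\ref{embedding:Box} with $i \neq 0$, reason inside $T$ from $\lambda_i$: since $i$ is not the root, $h$ enters $i$ at some positive stage, which by the definition of $h$ requires that a $T$-proof of $\neg \lambda_i$ exist, so $\Box_\tau \neg \lambda_i$; moreover $T$ proves outright that once $h$ has taken the value $i$ it remains among $i$ and the $R'$-successors of $i$, while provable $\Sigma_1$-completeness turns ``$h$ reaches $i$'' into $\Box_\tau$(``$h$ reaches $i$''), so together with clause~\ref{embedding:SomeOfAll} one obtains $\Box_\tau\big(\lambda_i \vee \bigvee_{iR'j} \lambda_j\big)$; combining with $\Box_\tau \neg \lambda_i$ yields $\Box_\tau\big(\bigvee_{iR'j} \lambda_j\big)$.

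I expect the only real obstacle to be bookkeeping rather than ideas: one must set up the fixed point for $h$ and the $\lambda_i$ simultaneously and correctly, keep the proof-predicate statements about $h$ at the $\Sigma_1$ level, and carry the bounded-search arguments for clauses~\ref{embedding:Diamond} and~\ref{embedding:Box} through in $\isig{1}$ rather than in $\PA$. As the underlying frame is a perfectly ordinary finite $\gl$-frame, this is exactly the verification performed in \cite{Boolos:1993:LogicOfProvability}, which applies here unchanged; the constant-domain refinement from Theorem~\ref{thm:constant_domain} will only become relevant in the next lemma, where satisfaction in $\M$ is matched with provability.
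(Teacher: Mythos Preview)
Your proposal is correct and is precisely the standard Solovay construction; the paper itself gives no proof of this lemma at all, simply citing \cite{Boolos:1993:LogicOfProvability} and taking the sentences $\lambda_i$ and their properties as given. Your observation that only the underlying finite transitive irreflexive frame matters, and that the first-order apparatus of $\M$ plays no role here, is exactly the reason the paper can invoke Boolos's result without modification.
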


The domain of every world is $M$. Let $m$ be the size of $M$, and $\bij{\cdot}$ be a bijection between $M$ and the set of numerals $\{\overline{0}, \hdots, \overline{m - 1}\}$.

We now define for a given $n$-ary predicate symbol $S$ and terms $\vec{t} = t_0, \hdots, t_{n-1}$ the $\cdot\ofs$ interpretation as follows:
  \begin{gather*}
    S(\vec{t})\ofs :=
    \bigvee_{i \leq N} \Big(\lambda_i \land \Phi^{S(\vec{t})}_i \Big) \text{, where}\\
    \Phi_i^{S(\vec{t})} := 
          \bigvee_{\la a_0, \hdots, a_{n-1} \ra \in S^{J_i}}
          \bigwedge_{l < n} \Big(
            \bij{a_l} = 
              \begin{cases}
                y_k \umod m &\text{if } t_l = x_k\\
                z_k \umod m &\text{if } t_l = c_k
              \end{cases}
          \Big)
      .
  \end{gather*}
Here $x_k$ is any $\QRC$ variable, $y_k$ is the corresponding $T$ variable, $c_k$ is any $\QRC$ constant, $z_k$ is the corresponding $T$ variable, and $\cdot^{J_i}$ is the denotation of a predicate symbol at world $i$.
Note that $x_k \in \fv(S(\vec{t}))$ if and only if $y_k \in \fv(S(\vec{t})\ofs)$ and $c_k$ appears in $S(\vec t)$ if and only if $z_k \in \fv(S(\vec t)\ofs)$. The $\cdot\ofs$ interpretation is extended to non-predicate formulas as described for $\cdot\fs$ at the beginning of this section.

\begin{remark}
  For any $\QRC$ formula $\varphi$, variable $x_k$ and constant $c_k$, we have that $x_k \in \fv(\varphi)$ if and only if $y_k \in \fv(\varphi\ofsT)$ and that $c_k$ appears in $\varphi$ if and only if $z_k \in \fv(\varphi\ofsT)$.
\end{remark}

We further note that $\varphi\ofsT$ is invariant under replacing variables by themselves modulo $m$, provably in $T$.

\begin{lemma}
\label{lem:mod}
  For any $\QRC$ formula $\varphi$ and any arithmetical variable $y$:
  \begin{enumerate}
    \item
      $
        T \vdash
          \varphi\ofsT \leftrightarrow \varphi\ofsT\subst{y}{y \umod m}
      $;
    \item
      $
        T \vdash
          \Forall{y} \varphi\ofsT
          \leftrightarrow
          \Forall{y < m} \varphi\ofsT
      $.
  \end{enumerate}
\end{lemma}
\begin{proof}
  The second item is a straightforward consequence of the first, which we prove by external induction on the complexity of $\varphi$. The cases of $\top$ and $\land$ are easy.

  For the case of the relation symbols, consider without loss of generality $S(x_0, c_0)$. The formula $S(x_0, c_0)\ofs$ has two free variables, namely $y_0$ and $z_0$, so the result is trivial when $y$ is not one of these.

  We check first that $T \vdash S(x_0, c_0)\ofs \leftrightarrow S(x_0, c_0)\ofs\subst{y_0}{y_0 \umod m}$. We have
  \begin{equation*}
    S(x_0, c_0)\ofs =
      \bigvee_{j \leq N}\Big(
        \lambda_j \land
          \bigvee_{\la a_0, a_1 \ra \in S^{J_j}}
          (
            \bij{a_0} = y_0 \umod m
            \land
            \bij{a_1} = z_0 \umod m
          )
      \Big)
      ,
  \end{equation*}
  and hence, noting that $y_0$ and $z_0$ are different variables,
  \begin{align*}
    S(x_0, c_0)\ofs&\subst{y_0}{y_0 \umod m} =\\
      &\bigvee_{j \leq N}\Big(
        \lambda_j \land
          \bigvee_{\la a_0, a_1 \ra \in S^{J_j}}
          (
            \bij{a_0} = (y_0 \umod m) \umod m
            \land
            \bij{a_1} = z_0 \umod m
          )
      \Big)
      .
  \end{align*}
  These are equivalent because $T$ proves $(y_0 \umod m) \umod m = y_0 \umod m$.
  If $y$ is $z_0$ instead, the argument is analogous.

  Consider now the case of $\Forall{x_0} \varphi$. If $y$ is $y_0$ there is nothing to show ($y_0$ is not a free variable of $(\Forall{x_0} \varphi)\ofsT$). Thus we may assume that $(\Forall{x_0} \varphi)\ofsT\subst{y}{y \umod m}$ is the same as $\Forall{y_0} \varphi\ofsT\subst{y}{y \umod m}$. By the induction hypothesis, $T \vdash \varphi\ofsT \leftrightarrow \varphi\ofsT\subst{y}{y \umod m}$, from which we obtain $T \vdash \Forall{y_0} \varphi\ofsT \leftrightarrow \Forall{y_0} \varphi\ofsT\subst{y}{y \umod m}$, as desired.

  Finally, in the case of $\Diamond \varphi$, note that $(\Diamond \varphi)\ofsT\subst{y}{y \umod m}$ is $\Diamond_T \varphi\ofsT\subst{y}{y \umod m}$. Thus the result is straightforward from the induction hypothesis under the box.
\end{proof}

We now prove two versions of a Truth Lemma, one for when $\varphi$ is forced at a world $i$ of $\M$, and one for when it isn't. We wish to show that $T$ proves $\varphi\ofsT$ (respectively $\neg \varphi\ofsT$) as a consequence of $\lambda_i$, as long as the free variables of $\varphi$ are interpreted in the same way in both settings.

In order to use concise notation, we shall say $\vec{y}$ instead of $y_0, \hdots, y_{n-1}$, and similarly for other terms. We also abbreviate iterated substitutions, writing $\varphi\ofsT\subst{\vec{y}}{\vec{\bij{g(x)}}}$ instead of $\varphi\ofsT\subst{y_0}{\bij{g(x_0)}}\cdots\subst{y_{n-1}}{\bij{g(x_{n-1})}}$ and writing $\varphi\ofsT\subst{\vec z}{\vec{\bij{c^{I_0}}}}$ instead of $\varphi\ofsT\subst{z_0}{\bij{(c_0)^{I_0}}}\cdots\subst{z_{n'-1}}{\bij{(c_{n'-1})^{I_0}}}$.

\begin{lemma}[Truth Lemma: positive]
\label{lem:truth_positive}
  For any $\QRC$ formula $\varphi$ with free variables $\vec x = x_0, \hdots, x_{n-1}$ and constants $\vec c = c_0, \hdots, c_{n'-1}$, any world $i \leq N$, and any assignment $g$:
  \begin{equation*}
    \M, i \Vdash^g \varphi
    \implies
    T \vdash \lambda_i \to
      \varphi\ofsT
        \subst{\vec y}{\vec{\bij{g(x)}}}
        \subst{\vec z}{\vec{\bij{c^{I_0}}}}
    .
  \end{equation*}
\end{lemma}
\begin{proof}
  By external induction on the complexity of $\varphi$. The cases of $\top$ and $\land$ are straightforward.

  In the case of relational symbols, we assume without loss of generality that the relevant formula is $S(x_0, c_0)$.
  If $\M, i \Vdash^g S(x_0, c_0)$, then $\la g(x_0), (c_0)^{I_i} \ra \in S^{J_i}$.
  Reason in T and assume $\lambda_i$.
  It suffices to prove $\Phi^{S(x_0, c_0)}_i\subst{y_0}{\bij{g(x_0)}}\subst{z_0}{\bij{(c_0)^{I_0}}}$, which implies $S(x_0, c_0)\ofs\subst{y_0}{\bij{g(x_0)}}\subst{z_0}{\bij{(c_0)^{I_0}}}$ under the assumption of $\lambda_i$.
  We need to find $\la a_0, a_1 \ra \in S^{J_i}$ such that $\bij{a_0} = \bij{g(x_0)} \umod m$ and $\bij{a_1} = \bij{(c_0)^{I_0}} \umod m$.
  Noting that $\bij{b} \umod m$ is provably equal to $\bij{b}$ for any $b$, we pick $a_0 := g(x_0)$ and $a_1 := (c_0)^{I_0}$. This concludes this step of the proof because $(c_0)^{I_0}$ is equal to $(c_0)^{I_i}$, for any world $i$.

  For $\Forall{x_0} \varphi$, assume without loss of generality that the free variables of $\varphi$ are $x_0$ and $x_1$. If $\M, i \Vdash^g \Forall{x_0} \varphi$ then for every assignment $h \xaltern{x_0} g$ we have $\M, i \Vdash^h \varphi$. We wish to show
  \begin{equation*}
    T \vdash \lambda_i \to (\Forall{y_0} \varphi\ofsT)
                                          \subst{y_1}{\bij{g(x_1)}}
                                          \subst{\vec z}{\vec{\bij{c^{I_0}}}}
    .
  \end{equation*}
  Reason in $T$ and assume $\lambda_i$. By Lemma~\ref{lem:mod}, it is enough to show
  \begin{equation*}
    (\Forall{y_0 < m} \varphi\ofsT)
      \subst{y_1}{\bij{g(x_1)}}
      \subst{\vec z}{\vec{\bij{c^{I_0}}}}
    .
  \end{equation*}
  Since $y_0$, $y_1$, and $\vec z$ are all different, we can push the substitutions inside and prove
  \begin{equation*}
    \Forall{y_0 < m} \varphi\ofsT
      \subst{y_1}{\bij{g(x_1)}}
      \subst{\vec z}{\vec{\bij{c^{I_0}}}}
  \end{equation*}
instead.
  Let $y_0 < m$ be arbitrary.
  Since $\bij{\cdot}$ is a bijection, there is $a \in M$ such that $\bij{a} = y_0$. We define an assignment $h$ such that $h \xaltern{x_0} g$ and $h(x_0) := a$. By assumption, $\M, i \Vdash^h \varphi$, so by the induction hypothesis we obtain
  \begin{equation*}
    \varphi\ofsT
      \subst{y_0}{\bij{h(x_0)}}
      \subst{y_1}{\bij{h(x_1)}}
      \subst{\vec z}{\vec{\bij{c^{I_0}}}}
    .
  \end{equation*}
  This concludes the argument because $\bij{h(x_0)} = y_0$ and $h(x_1) = g(x_1)$.

  Finally, consider the case of $\Diamond \varphi$. If $\M, i \Vdash^g \Diamond \varphi$, then there is $j \leq N$ such that $iRj$ and $\M, j \Vdash^g \varphi$. Reason in $T$ and assume $\lambda_i$. By Lemma~\ref{embedding:Diamond}, we obtain $\Diamond_T \lambda_j$. Then the induction hypothesis under the box gives us $\Diamond_T (\varphi\ofsT\subst{\vec y}{\vec{\bij{g(x)}}}\subst{\vec z}{\vec{\bij{c^{I_0}}}})$, as desired.
\end{proof}

\begin{lemma}[Truth Lemma: negative]
\label{lem:truth_negative}
For any $\QRC$ formula $\varphi$ with free variables $\vec x = x_0, \hdots, x_{n-1}$ and constants $\vec c = c_0, \hdots, c_{n'-1}$, any world $0 < i \leq N$, and any assignment $g$:
  \begin{equation*}
    \M, i \not\Vdash^g \varphi
    \implies
    T \vdash \lambda_i \to
      \neg \varphi\ofsT
        \subst{\vec y}{\vec{\bij{g(x)}}}
        \subst{\vec z}{\vec{\bij{c^{I_0}}}}
    .
  \end{equation*}
\end{lemma}
\begin{proof}
  By external induction on the complexity of $\varphi$. The cases of $\top$ and $\land$ are straightforward.

  For the relational symbols, consider $S(x_0, c_0)$ without loss of generality.
  If $\M, i \not\Vdash^g S(x_0, c_0)$, then $\la g(x_0), (c_0)^{I_i} \ra \notin S^{J_i}$.
  Reason in $T$ and assume $\lambda_i$.
  We obtain $\neg \lambda_j$ for every $j \neq i$ by Lemma~\ref{embedding:OneOfAll}, and hence need only show $\neg \Phi^{S(x_0, c_0)}_i\subst{y_0}{\bij{g(x_0)}}\subst{z_0}{\bij{(c_0)^{I_0}}}$.
  In other words, we need to check that if $\la a_0, a_1 \ra \in S^{J_i}$, either $\bij{a_0} \neq \bij{g(x_0)} \umod m$, or $\bij{a_1} \neq \bij{(c_0)^{I_0}} \umod m$.
  This follows from our observation that $\la g(x_0), (c_0)^{I_i} \ra \notin S^{J_i}$, taking into account that $\bij{b} \umod m$ is equal to $\bij{b}$ for any $b$, that $\bij{\cdot}$ is injective, and that $(c_0)^{I_0} = (c_0)^{I_i}$.

  Consider now the case of $\Forall{x_0} \varphi$.
  We assume without loss of generality that $\fv(\varphi) = \{x_0, x_1\}$.
  If $\M, i \not\Vdash^g \Forall{x_0} \varphi$, then there is an assignment $h \xaltern{x_0} g$ such that $\M, i \not\Vdash^h \varphi$.
  Reason in $T$ and assume $\lambda_i$.
  We obtain $\neg \varphi\ofsT\subst{y_0}{\bij{h(x_0)}}\subst{y_1}{\bij{h(x_1)}}\subst{\vec z}{\vec{\bij{c^{I_0}}}}$ by the induction hypothesis, which implies $\neg \Forall{y_0} \varphi\ofsT\subst{y_1}{\bij{h(x_1)}}\subst{\vec z}{\vec{\bij{c^{I_0}}}}$.
  This is what we wanted, taking into account that $h(x_1) = g(x_1)$.

  Finally, in the case of $\Diamond \varphi$ with $x_0$ as the only free variable (without loss of generality), assume that $\M, i \not\Vdash^g \Diamond \varphi$. Then for every $j$ such that $iRj$, we have $\M, j \not\Vdash^g \varphi$ and thus for each such $j$ the induction hypothesis gives us $T \vdash \lambda_j \to \neg\varphi\ofsT\subst{y_0}{\bij{g(x_0)}}$, which put together implies $T \vdash \bigvee_{iRj} \lambda_j \to \neg \varphi\ofsT\subst{y_0}{\bij{g(x_0)}}$. Reason in $T$ and assume $\lambda_i$. By Lemma~\ref{embedding:Box} and our assumption, we obtain $\Box_T \bigvee_{iRj} \lambda_j$. Taking the previous observation under the box, we conclude $\Box_T \neg \varphi\ofsT\subst{y_0}{\bij{g(x_0)}}$, which is precisely $\neg (\Diamond \varphi)\ofsT\subst{y_0}{\bij{g(x_0)}}$.
\end{proof}

We are ready to prove something analogous to Solovay's Theorem, which is the precursor to our desired completeness theorem.

\begin{theorem}
\label{thm:arithmetical_completeness*}

  If $\varphi, \psi$ are $\QRC$ formulas with free variables $\vec{x}$ and constants $\vec c$ such that $\varphi \not\vdash \psi$, we have
  $T \not\vdash (\varphi\ofsT \to \psi\ofsT)\subst{\vec{y}}{\vec{\bij{g_{\varphi, \psi}(x)}}}\subst{\vec z}{\vec{\bij{c^{I_0}}}}$.
\end{theorem}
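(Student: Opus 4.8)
The plan is to assemble the two Truth Lemmas at the old root together with the Embedding Lemma and the soundness of $T$, in the style of the final step of Solovay's proof.

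First I would instantiate everything at world $1$, which is the root of $\M_{\varphi,\psi}$ sitting inside $\M$. Since $\M$ is a copy of $\M_{\varphi,\psi}$ with an added new root $0$, we still have $\M, 1 \Vdash^{g_{\varphi,\psi}} \varphi$ and $\M, 1 \not\Vdash^{g_{\varphi,\psi}} \psi$. Writing $g := g_{\varphi,\psi}$, Lemma~\ref{lem:truth_positive} applied at $i = 1$ gives $T \vdash \lambda_1 \to \varphi^{*}\subst{y_0}{\gnum{g(x_0)}}\cdots\subst{y_{n-1}}{\gnum{g(x_{n-1})}}$, and since $1 > 0$, Lemma~\ref{lem:truth_negative} applied at $i = 1$ gives $T \vdash \lambda_1 \to \neg\psi^{*}\subst{y_0}{\gnum{g(x_0)}}\cdots\subst{y_{n-1}}{\gnum{g(x_{n-1})}}$. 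Here I would note that because each $\gnum{g(x_k)}$ is a numeral (a closed term), the iterated substitutions in the Truth Lemmas coincide with the simultaneous substitution $\subst{\vec{y}}{\vec{\gnum{g_{\varphi,\psi}(x)}}}$ appearing in the statement. Combining the two implications, $T \vdash \lambda_1 \to \neg(\varphi^{*} \to \psi^{*})\subst{\vec{y}}{\vec{\gnum{g_{\varphi,\psi}(x)}}}$.

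Next I argue by contradiction: suppose $T \vdash (\varphi^{*} \to \psi^{*})\subst{\vec{y}}{\vec{\gnum{g_{\varphi,\psi}(x)}}}$. Then $T \vdash \neg\lambda_1$, and hence by necessitation $T \vdash \square_\tau \neg\lambda_1$, that is, $T \vdash \neg\Diamond_\tau\lambda_1$. On the other hand, since the new root $0$ accesses world $1$ in $\M$, Lemma~\ref{embedding:Diamond} yields $T \vdash \lambda_0 \to \Diamond_\tau\lambda_1$. Putting these together gives $T \vdash \neg\lambda_0$. But $T$ is sound, so this forces $\N \vDash \neg\lambda_0$, contradicting Lemma~\ref{embedding:Lambda0istrue}. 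Therefore $T \not\vdash (\varphi^{*} \to \psi^{*})\subst{\vec{y}}{\vec{\gnum{g_{\varphi,\psi}(x)}}}$.

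I do not expect a genuinely hard step here: the heavy lifting is already packaged into the Truth Lemmas and the Embedding Lemma. The two points that need care are that the negative Truth Lemma is only available at worlds $i > 0$ — which is precisely why the extra root $0$ was introduced, and why the argument is carried out at world $1$ rather than $0$ — and the closing appeal to the soundness of $T$ together with $\N \vDash \lambda_0$, which is what actually converts provable refutation of $\lambda_1$ into a contradiction.
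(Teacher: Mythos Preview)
Your proof is correct and follows essentially the same approach as the paper: both apply the two Truth Lemmas at world $1$ to get $T \vdash \lambda_1 \to \neg(\varphi^{*} \to \psi^{*})\subst{\vec{y}}{\vec{\gnum{g(x)}}}$, then combine this with $T \vdash \lambda_0 \to \Diamond_\tau \lambda_1$, $\N \vDash \lambda_0$, and the soundness of $T$. The only cosmetic difference is that the paper argues directly (deriving $\N \vDash \Diamond_\tau \neg(\varphi^{*} \to \psi^{*})\subst{\vec{y}}{\vec{\gnum{g(x)}}}$ and reading off unprovability) whereas you wrap the same ingredients in a proof by contradiction.
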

\begin{proof}
  Recall that $\M$ satisfies $\varphi$ and not $\psi$ at world $1$ under the assignment $g := g_{\varphi, \psi}$.
  
  Since $\M, 1 \Vdash^g \varphi$, we obtain $T \vdash \lambda_1 \to \varphi\ofsT\subst{\vec{y}}{\vec{\bij{g(x)}}}\subst{\vec z}{\vec{\bij{c^{I_0}}}}$ from Lemma~\ref{lem:truth_positive}.
  Since $\M, 1 \not\Vdash^g \psi$, we obtain $T \vdash \lambda_1 \to \neg \psi\ofsT\subst{\vec{y}}{\vec{\bij{g(x)}}}\subst{\vec z}{\vec{\bij{c^{I_0}}}}$ from Lemma~\ref{lem:truth_negative}.
  Thus $T \vdash \lambda_1 \to \neg (\varphi\ofsT \to \psi\ofsT)\subst{\vec{y}}{\vec{\bij{g(x)}}}\subst{\vec z}{\vec{\bij{c^{I_0}}}}$.

  By Lemma~\ref{embedding:Diamond} and the fact that $0R1$, we obtain $T \vdash \lambda_0 \to \Diamond_T \lambda_1$, so putting this together with the previous observation under the box, $T \vdash \lambda_0 \to \Diamond_T \neg (\varphi\ofsT \to \psi\ofsT)\subst{\vec{y}}{\vec{\bij{g(x)}}}\subst{\vec z}{\vec{\bij{c^{I_0}}}}$.

  By Lemma~\ref{embedding:Lambda0istrue}, we know that $\mathbb{N} \vDash \lambda_0$, and thus by the soundness of $T$, we know that $\mathbb{N} \vDash \Diamond_T \neg (\varphi\ofsT \to \psi\ofsT)\subst{\vec{y}}{\vec{\bij{g(x)}}}\subst{\vec z}{\vec{\bij{c^{I_0}}}}$. Then it must be the case that $T \not\vdash (\varphi\ofsT \to \psi\ofsT)\subst{\vec{y}}{\vec{\bij{g(x)}}}\subst{\vec z}{\vec{\bij{c^{I_0}}}}$.
\end{proof}

We now define an arithmetical realization $\cdot\ois$ in the style of Section~\ref{sec:arithmetics} that behaves like $\cdot\ofs$. Recall that if $\tau$ is a $\Sigma^0_1$ formula axiomatizing $T$, the realization $\cdot\isT$ sends $\QRC$ formulas to $\Sigma^0_1$ axiomatizations of theories extending $T$.
So in particular we always have $(\varphi \land \psi)\isT = \varphi\isT \lor \psi\isT$, because $\chi$ is an axiom of the union of the theories axiomatized by $\varphi\isT$ and $\psi\isT$ precisely when $\chi$ is an axiom of one of them.
This is different from $\cdot\fsT$ realizations such as $\cdot\ofsT$, which send $\QRC$ formulas to generic arithmetical formulas. We then interpret these formulas as finite extensions of $T$, so in particular $(\varphi \land \psi)\fsT = \varphi\fsT \land \psi\fsT$, because the union of $T + \varphi\fsT$ and $T + \psi\fsT$ is the same as $T + \varphi\fsT \land \psi\fsT$.

  We define $\cdot\ois$ such that $S(\vec{t})\ois := \tau(u) \lor (u = \gnum{S(\vec{t})\ofs})$, and extend it to non-atomic formulas as described in Definition~\ref{def:isT}.
  Note that $x_k \in \fv(\varphi)$ if and only if $y_k \in \fv(\varphi\oisT)$ and $c_k$ appears in $\varphi$ if and only if $z_k \in \fv(\varphi\oisT)$.

\begin{lemma}
\label{lem:same_thing}

For any strictly positive formula $\varphi$ with free variables $\vec{x}$ and constants $\vec{c}$:
  \begin{equation*}
    T \vdash
      \Forall{\theta} \Forall{\vec y, \vec z} (
        \Box_{\varphi\oisT} \theta \leftrightarrow \Box_T (\varphi\ofsT \to \theta)
      )
    ,
  \end{equation*}
  where $\theta$ is (the Gödel number of) a closed formula.
\end{lemma}

\begin{proof}
  By external induction on $\varphi$. There is nothing to show for $\top$, because since $\tau$ is a bounded formula, $\Box_{\tau}$ is the same as $\Box_T$.

  The case of relational symbols is a straightforward consequence of the formalized Deduction Theorem \cite{Feferman1960}.

In the case of $\land$, we take $\varphi = \psi \land \delta$, and we omit the variables $\vec{y}$ and $\vec{z}$, as they introduce visual clutter but don't make the proof any more complex.

($\to$)
Reason in $T$ and fix an arbitrary $\theta$, assuming $\Box_{\psi\oisT \lor \delta\oisT} \theta$. Then there is a finite sequence $\pi= \pi_0, \ldots ,\pi_n$  with $\pi_n = \theta$ that is a proof of $\theta$ in the theory axiomatized by $\psi\oisT \lor \delta\oisT$. Each formula $\chi_i$ occurring in $\pi$ that is not a consequence of previous formulas in the sequence through a rule satisfies either $\psi\oisT$ or $\delta\oisT$.
Then we have in particular that either $\Box_{\psi\oisT} \chi_i$ or $\Box_{\delta\oisT} \chi_i$ for each such $\chi_i$, whence by the induction hypothesis either $\Box_T(\psi\ofsT \to \chi_i)$ or $\Box_T(\delta\ofsT \to \chi_i)$ holds. In both cases we have $\Box_T(\psi\ofsT \wedge \delta\ofsT \to \chi_i)$, and thus the proof of $\theta$ can be repeated in $T$ under the assumption of $(\psi \land \delta)\ofsT$, as desired.

$(\leftarrow)$
Fix $\theta$, $\vec{y}$ and $\vec z$. By the induction hypothesis (taking $\theta$ to be $\psi\ofsT$) we see that $\Box_{\psi\oisT}\psi\ofsT$ and likewise $\Box_{\delta\oisT} \delta\ofsT$. Thus $\Box_{\psi\oisT \lor \delta\oisT} (\psi\ofsT \land \delta\ofsT)$. By assumption we have $\Box_T(\psi\ofsT \land \delta\ofsT \to \theta)$, and since $\varphi\oisT$ extends $T$, we may conclude $\Box_{\varphi\oisT} \theta$ as desired.

The $\forall$ case follows the same idea as the $\land$ case. Consider $\varphi = \Forall{x_0} \psi$, with $\fv(\psi) = \{x_0, x_1\}$ without loss of generality. Note that $x_0$ is represented by $y_0$ in $T$, and this is always a different variable from any $z$ used to represent $\QRC$ constants. As there is no further complication with constants, we omit them.
Let $\theta$ and $l$ be arbitrary and reason in $T$.

$(\to)$
If $\Box_{\Exists{y_0} \psi\oisT\subst{y_1}{l}} \theta$, then there is a proof $\pi = \pi_0, \hdots, \pi_n$ where $\pi_n = \theta$ and each axiom $\chi_i$ in $\pi$ satisfies $\psi\oisT\subst{y_1}{l}\subst{y_0}{k_i}$ for some number $k_i$, and consequently $\Box_T (\psi\ofsT\subst{y_1}{l}\subst{y_0}{k_i} \to \chi_i)$ by the induction hypothesis for each $i$. Then by weakening we conclude $\Box_T (\Forall{y_0} \psi\ofsT\subst{y_1}{l} \to \chi_i)$ for each $i$, and we are done.

$(\leftarrow)$
Assume $\Box_T (\Forall{y_0} \psi\ofsT\subst{y_1}{l} \to \theta)$. By Lemma~\ref{lem:mod} under the box, we obtain $\Box_T (\Forall{y_0 < m} \psi\ofsT\subst{y_1}{l} \to \theta)$, where $m$ is the size of the domain of $\M$. Using the induction hypothesis for each $k < m$ with $\theta := \psi\ofsT\subst{y_1}{l}\subst{y_0}{k}$, $y_0 := k$, and $y_1 := l$, we get $\Forall{k < m} \Box_{\psi\oisT\subst{y_1}{l}\subst{y_0}{k}} \psi\ofsT\subst{y_1}{l}\subst{y_0}{k}$, and in particular $\Forall{k < m} \Box_{\Exists{y_0} \psi\oisT\subst{y_1}{l}} \psi\ofsT\subst{y_1}{l}\subst{y_0}{k}$. Then by $\Sigma^0_1$ collection we can change the order of the quantifier and the box, concluding $\Box_{\Exists{y_0} \psi\oisT\subst{y_1}{l}} \Forall{y_0 < m} \psi\ofsT\subst{y_1}{l}$, which is enough to conclude this part of the proof.

  Finally, for the case of $\varphi = \Diamond \psi$, we start by observing that applying the induction hypothesis to $\bot$ yields 
$
  T \vdash \Forall{\vec{y}, \vec z} (    
    \Diamond_{\varphi\oisT} \top \leftrightarrow \Diamond_T \varphi\ofsT )
$.
Note that $(\Diamond \psi)\oisT = \tau(u) \lor (u = \gnum{\Diamond_{\psi\oisT} \top})$, and thus $\Box_{(\Diamond \psi)\oisT} \theta$ is equivalent to $\Box_T (\Diamond_{\psi\oisT} \top \to \theta)$ by the formalized Deduction Theorem. The previous observation under the box then suffices to finish the proof.
\end{proof}

We are finally ready to prove arithmetical completeness for any sound c.e.~theory extending $\isig{1}$.

\begin{theorem}[Arithmetical completeness]
  $\QRC \supseteq \QRCt(T)$.
\end{theorem}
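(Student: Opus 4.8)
The plan is to establish the contrapositive of the inclusion $\mathcal{QRC}_1(T) \subseteq \QRC$. So fix $\QRC$-formulas $\varphi$ and $\psi$ with $\varphi \not\vdash \psi$, and let $\vec{x}$ (resp.\ $\vec{c}$) list all variables (resp.\ constants) free in $\varphi$ or in $\psi$. I would build the finite constant-domain model $\M$, the assignment $g := g_{\varphi, \psi}$, and the interpretation $\cdot^{*}$ exactly as in Section~\ref{sec:arithmetical_completeness}, and let $\cdot^\star$ be the companion realization defined right before Lemma~\ref{lem:same_thing}, namely $(S(\vec{t}))^\star := \tau(u) \lor (u = \gnum{(S(\vec{t}))^{*'}})$. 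It then suffices to show that this single realization $\cdot^\star$ witnesses $\varphi \vdash \psi \notin \mathcal{QRC}_1(T)$, that is, $T \not\vdash \Forall{\theta} \Forall{\vec{y}} \Forall{\vec{z}}(\Box_{\psi^\star}\theta \to \Box_{\varphi^\star}\theta)$.

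Assume toward a contradiction that $T \vdash \Forall{\theta} \Forall{\vec{y}} \Forall{\vec{z}}(\Box_{\psi^\star}\theta \to \Box_{\varphi^\star}\theta)$. Instantiating the universally quantified $\vec{z}$ by the numerals $\vec{\gnum{c^{I_0}}}$ gives $T \vdash \Forall{\theta} \Forall{\vec{y}}(\Box_{\psi^\star\subst{\vec{z}}{\vec{\gnum{c^{I_0}}}}}\theta \to \Box_{\varphi^\star\subst{\vec{z}}{\vec{\gnum{c^{I_0}}}}}\theta)$. Applying Lemma~\ref{lem:same_thing} once for $\varphi$ and once for $\psi$ rewrites both boxes, yielding $T \vdash \Forall{\theta} \Forall{\vec{y}}(\Box_\tau(\psi^{*} \to \theta) \to \Box_\tau(\varphi^{*} \to \theta))$.

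Next I would instantiate $\theta$ by a term computing the Gödel number of $\psi^{*}$; since $\psi^{*}$ has free variables among $\vec{y}$, this is a definable function of $\vec{y}$, the substitution keeps $\vec{y}$ free, and it turns $\Box_\tau(\psi^{*} \to \theta)$ into $\Box_\tau(\psi^{*} \to \psi^{*})$ and $\Box_\tau(\varphi^{*} \to \theta)$ into $\Box_\tau(\varphi^{*} \to \psi^{*})$. As $\psi^{*} \to \psi^{*}$ is a logical validity we have $T \vdash \Forall{\vec{y}} \Box_\tau(\psi^{*} \to \psi^{*})$, and hence $T \vdash \Forall{\vec{y}} \Box_\tau(\varphi^{*} \to \psi^{*})$. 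Instantiating $\vec{y}$ by the numerals $\vec{\gnum{g(x)}}$ yields $T \vdash \Box_\tau\big((\varphi^{*} \to \psi^{*})\subst{\vec{y}}{\vec{\gnum{g(x)}}}\big)$, and since $(\varphi^{*} \to \psi^{*})\subst{\vec{y}}{\vec{\gnum{g(x)}}}$ is a sentence, the soundness of $T$ lets us conclude $T \vdash (\varphi^{*} \to \psi^{*})\subst{\vec{y}}{\vec{\gnum{g(x)}}}$. This contradicts Theorem~\ref{thm:arithmetical_completeness*}, so no such $T$-proof exists and $\varphi \vdash \psi \notin \mathcal{QRC}_1(T)$, which is the desired conclusion.

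The substantive content has already been packaged: Theorem~\ref{thm:arithmetical_completeness*} carries the Solovay construction and Lemma~\ref{lem:same_thing} bridges the ``axiomatization'' realization $\cdot^\star$ with the ``generic formula'' realization $\cdot^{*}$, so what remains is bookkeeping. The one step I expect to require genuine care is the instantiation of the formula-variable $\theta$ by $\psi^{*}$: because $\psi^{*}$ carries the free arithmetical variables $\vec{y}$, one must substitute a term that uniformly computes $\gnum{\psi^{*}(\dot{\vec{y}})}$ and verify that this commutes correctly with the provability predicates and with the subsequent substitution of numerals for $\vec{y}$. The closing appeal to the soundness of $T$ is what licenses the descent from $T \vdash \Box_\tau \chi$ to $T \vdash \chi$.
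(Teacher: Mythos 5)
Your proposal is correct and follows essentially the same route as the paper: instantiate $\vec{z}$ by $\vec{\gnum{c^{I_0}}}$, apply Lemma~\ref{lem:same_thing} to both boxes, instantiate $\theta$ by (the code of) $\psi^*$ and $\vec{y}$ by $\vec{\gnum{g_{\varphi,\psi}(x)}}$, and use the soundness of $T$ to pass from $T \vdash \Box_\tau\chi$ to $T \vdash \chi$, contradicting Theorem~\ref{thm:arithmetical_completeness*}. The only difference is that you make explicit the care needed when substituting the parametrized code $\gnum{\psi^*(\dot{\vec{y}})}$ for $\theta$, a step the paper compresses into ``so in particular.''
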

\begin{proof}
  Recall the definition of $\QRCt(T)$:
  \begin{equation*} 
  \QRCt(T) = \{\varphi(\vec{x}, \vec{c}) \vdash \psi(\vec{x}, \vec{c}) \mid
    \Forall{\cdot\is} 
      T \vdash \Forall{\theta} \Forall{\vec{y}, \vec{z}} (\Box_{\psi\isT} \theta \to \Box_{\varphi\isT} \theta)
  \}
  ,
  \end{equation*}
where $\theta$ is closed.

  We show that if $\varphi \not\vdash \psi$, then the realization $\cdot\ois$ defined above is such that
  \begin{equation*}
    T \not\vdash \Forall{\theta} \Forall{\vec{y}, \vec{z}} (\Box_{\psi\oisT} \theta \to \Box_{\varphi\oisT} \theta)
    .
  \end{equation*}
  Suppose towards a contradiction that $T$ does prove this formula. Then by Lemma~\ref{lem:same_thing}:
  \begin{equation*}
    T \vdash \Forall{\theta} \Forall{\vec{y}, \vec z} (\Box_T (\psi\ofsT \to \theta) \to \Box_T (\varphi\ofsT \to \theta))
    .
  \end{equation*}
  Taking $\theta := \psi\ofsT\subst{\vec{y}}{\vec{\bij{g_{\varphi, \psi}(x)}}}\subst{\vec z}{\vec{\bij{c^{I_0}}}}$, $\vec{y} := \vec{\bij{g_{\varphi, \psi}(x)}}$, and $\vec z := \vec{\bij{c^{I_0}}}$, we conclude
  \begin{equation*}
    T \vdash
      \Box_T (\varphi\ofsT \to \psi\ofsT)
                \subst{\vec{y}}{\vec{\bij{g_{\varphi, \psi}(x)}}}
                \subst{\vec z}{\vec{\bij{c^{I_0}}}}
    .
  \end{equation*}
  This together with the soundness of $T$ contradicts Theorem~\ref{thm:arithmetical_completeness*}.
\end{proof}

We have seen that $\QRC = \mathcal{QRC}_1(T)$ for any sound c.e.~theory $T$ extending $\isig{1}$. Thus $\mathcal{QRC}_1(T)$ is constant over a large class of theories, and for these theories it does not depend on the specific axiomatization $\tau$ chosen. This is similar to the propositional case, but simpler than the full predicate case, where $\QPL(T)$ is known to depend on both $T$ (as shown by Montagna \cite{Montagna1984}) and $\tau$ (as shown by Artemov \cite{Artemov1986}).

\section{A decidable fragment of \texorpdfstring{$\QPL(\PA)$}{QPL(PA)}}
\label{sec:QPL}

As we mentioned before, Vardanyan's results are very robust and $\Pi^0_2$ completeness can already be obtained for the language with just one unary predicate symbol and no nested occurrences of $\Box$, as shown in \cite{Vardanyan1988}. However, Artemov and Japaridze \cite{ArtemovJaparidze1990} managed to carve out a non-trivial decidable fragment of $\QPL(\PA)$: all formulas that are decidable on finite Kripke frames correspond directly to a fragment of $\QPL(\PA)$.
They further observed that as a corollary one can conclude the decidability of the one variable fragment of $\QPL(\PA)$.

The above may seem contradictory with Vardanyan's result on $\Pi^0_2$ completeness of the fragment of $\QPL(\PA)$ with just one unary predicate symbol. However, although it is easy to see that in predicate logic any sentence with just one unary predicate is equivalent to one in the one-variable fragment, this does not hold when modalities are involved, as exhibited by the formula $\Forall{x} \Forall{y} \Box (P(x) \lor \neg P(y))$.

In this section we shall see that $\QRC$ gives rise to a new decidable fragment of $\QPL(\PA)$. Let $\Lbf$ be the language of full quantified modal logic, based on the same signatures as the language of $\QRC$ and extending the latter with $\to$, $\bot$ and the usual abbreviations. We extend the notion of finite arithmetical realization to $\Lbf$ as follows:
\begin{itemize}
  \item $\bot\fsPA := \bot$;
  \item $(A \to B)\fsPA := A\fsPA \to B\fsPA$.
\end{itemize}

We now define the quantified provability logic of $\PA$ as the set of always provable $\Lbf$ formulas:
\begin{equation*}
  \QPL(\PA) := \{
    A(\vec x, \vec c) \in \Lbf
    \mid
    \text{for any }\cdot\fs, \text{ we have } \PA \vdash \Forall{\vec y, \vec z} A\fsPA
  \}
  ,
\end{equation*}
where $\vec y$ are the arithmetical counterpart of $\vec x$ and $\vec z$ the arithmetical counterpart of $\vec c$, as before.

\begin{theorem}
  Let $\varphi$ and $\psi$ be $\QRC$ formulas. Then
  \begin{equation*}
    \varphi \vdash \psi
    \iff
    \varphi \to \psi \in \QPL(\PA)
    .
  \end{equation*}
\end{theorem}
\begin{proof}
  We begin with the left-to-right implication, which we prove by induction on the proof of $\varphi \vdash \psi$.

  Consider $\varphi \vdash \top$ with $\vec x, \vec c$ the free variables and constants appearing in $\varphi$, respectively. Let $\cdot\fs$ be any realization. We wish to show that $\PA \vdash \Forall{\vec y, \vec z} (\varphi\fsPA \to \top)$, which is clearly the case.
  We are equally easily convinced that $\PA \vdash \Forall{\vec y, \vec z} (\varphi\fsPA \to \varphi\fsPA)$ and that $\PA \vdash \Forall{\vec y, \vec{y'}, \vec z, \vec{z'}} (\varphi\fsPA \land \psi\fsPA \to \varphi\fsPA)$ (where $\vec{x'}$ and $\vec{c'}$ are the variables and constants of $\psi$).

  Let $\vec x$ and $\vec c$ be the constants appearing in either $\varphi$, $\psi$, or $\chi$.
  
  The conjunction introduction rule states that from $\varphi \vdash \psi$ and $\varphi \vdash \chi$ we can obtain $\varphi \vdash \psi \land \chi$. Let $\cdot\fs$ be any realization. We wish to prove $\PA \vdash \Forall{\vec y, \vec z} (\varphi\fsPA \to \psi\fsPA \land \chi\fsPA)$. By the induction hypothesis we know that $\PA \vdash \Forall{\vec y, \vec z} (\varphi\fsPA \to \psi\fsPA)$ and $\PA \vdash \Forall{\vec y, \vec z} (\varphi\fsPA \to \chi\fsPA)$, which easily allows us to prove the desired goal.

  The cut rule states that if $\varphi \vdash \psi$ and $\psi \vdash \chi$, then $\varphi \vdash \chi$. It is enough to show $\PA \vdash \Forall{\vec y, \vec z} (\varphi\fsPA \to \chi\fsPA)$. By the induction hypothesis, we know both $\PA \vdash \Forall{\vec y, \vec z} (\varphi\fsPA \to \psi\fsPA)$ and $\PA \vdash \Forall{\vec y, \vec z} (\psi\fsPA \to \chi\fsPA)$. The result follows handily.

  The necessitation rule states that if $\varphi \vdash \psi$, then $\Diamond \varphi \vdash \Diamond \psi$. We work towards showing $\PA \vdash \Forall{\vec y, \vec z} (\Diamond_\PA \varphi\fsPA \to \Diamond_\PA \psi\fsPA)$.
  By the induction hypothesis, we know that $\PA \vdash \Forall{\vec y, \vec z} (\varphi\fsPA \to \psi\fsPA)$. Taking this under the box, we obtain $\PA \vdash \Box_\PA \Forall{\vec y, \vec z} (\varphi\fsPA \to \psi\fsPA)$. This implies that $\PA \vdash \Forall{\vec y, \vec z} \Box_\PA (\varphi\fsPA \to \psi\fsPA)$ by the formalized converse Barcan Formula, which in turn implies our desired goal.

  Consider now $\Diamond \Diamond \varphi \to \Diamond \varphi$. We wish to show $\PA \vdash \Forall{\vec y, \vec z} (\Diamond_\PA \Diamond_\PA \varphi\fsPA \to \Diamond_\PA \varphi\fsPA)$. This holds by provable $\Sigma^0_1$ completeness.

  We turn to the $\forall$-introduction rule on the right, that if $\varphi \vdash \psi$ then $\varphi \vdash \Forall{x'} \psi$, as long as $x' \not \in \fv(\varphi)$. Let $\vec x$ be the free variables appearing in either $\varphi$ or $\Forall{x'} \psi$. We wish to show $\PA \vdash \Forall{\vec y, \vec z} (\varphi\fsPA \to \Forall{y'} \psi\fsPA)$. By the induction hypothesis, $\PA \vdash \Forall{\vec y, y', \vec z} (\varphi\fsPA \to \psi\fsPA)$. The goal follows from the observation that $y' \not\in \fv(\varphi)$.

  For the $\forall$-introduction rule on the left, that if $\varphi\subst{x'}{t} \vdash \psi$ then $\Forall{x'} \varphi \vdash \psi$ ($t$ free for $x'$ in $\varphi$), we want to show $\PA \vdash \Forall{\vec y, \vec z} (\Forall{y'} \varphi\fsPA \to \psi\fsPA)$.
  Let $w$ be the $\PA$ variable corresponding to $t$. Then $(\varphi\subst{x'}{t})\fsPA$ is $\varphi\fsPA\subst{y'}{w}$ and the induction hypothesis tells us that $\PA \vdash \Forall{\vec y, \vec z, w} (\varphi\fsPA\subst{y'}{w} \to \psi\fsPA)$, which readily implies the desired result.

  For the term instantiation rule, that if $\varphi \vdash \psi$ then $\varphi\subst{x'}{t} \vdash \psi\subst{x'}{t}$, let $w$ be the $\PA$ variable corresponding to $t$. We wish to show $\PA \vdash \Forall{\vec y, \vec z, w} (\varphi\fsPA\subst{y'}{w} \to \psi\fsPA\subst{y'}{w})$. The induction hypothesis is $\PA \vdash \Forall{\vec y, y', \vec z} (\varphi\fsPA \to \psi\fsPA)$, so this is a simple matter of variable renaming.

  Finally, consider the constant elimination rule, that if $\varphi\subst{x'}{c'} \vdash \psi\subst{x'}{c'}$ then $\varphi \vdash \psi$, with $c'$ not appearing in either $\varphi$ nor $\psi$.
  We wish to show $\PA \vdash \Forall{\vec y, y', \vec z} (\varphi\fsPA \to \psi\fsPA)$.
  We know $\PA \vdash \Forall{\vec y, \vec z, z'} (\varphi\fsPA\subst{y'}{z'} \to \psi\fsPA\subst{y'}{z'})$ by the induction hypothesis. The result follows by renaming $z'$ to $y'$ in the induction hypothesis, noting that $z'$ does not appear in either $\varphi\fsPA$ nor $\psi\fsPA$.

  \vspace{3mm}

  We turn now to the right-to-left implication, which we address by taking the contrapositive. Thus, let $\varphi$ and $\psi$ be strictly positive formulas with free variables $\vec{x}$ and constants $\vec{c}$ such that $\varphi \not\vdash \psi$. By Theorem~\ref{thm:arithmetical_completeness*}, we have
  \begin{equation*}
    \PA \not\vdash
      (\varphi\ofsPA \to \psi\ofsPA)
        \subst{\vec{y}}{\vec{\bij{g_{\varphi, \psi}(x)}}}
        \subst{\vec z}{\vec{\bij{c^{I_0}}}}
    ,
  \end{equation*}
and thus $\varphi \to \psi \notin \QPL(\PA)$.

\end{proof}

\section{Heyting Arithmetic}
\label{sec:HA}

We end this paper with a foray into intuitionistic arithmetic. We show that $\QRC$ is sound with respect to Heyting Arithmetic ($\HA$) and conjecture that it is complete as well.

Let $\eta(u)$ be a $\Sigma^0_1$ formula naturally axiomatizing $\HA$.\footnote{$\HA$ has poly-time decidable axiomatizations, but $\Sigma^0_1$ suffices for this section.} The $\HA$-provability of $\varphi$ can thus be expressed by $\Box_\eta \varphi$, where we recall that $\Box_\tau \varphi := \Exists{p} \Prf_\tau(p, \gnum{\varphi})$, where
\begin{align*}
  \Prf_\tau(p, n) :=
    \text{sequence}(p)
    &\land p_{|p|-1} = n \land {}\\
    \Forall{k < |p|} &(
      \tau(p_k)
      \lor \Exists{i, j < k} \text{MP}(p_i, p_j, p_k)
      \lor \Exists{i < k} \text{Gen}(p_i, p_k)
    )
    .
\end{align*}

We observe that a number of standard results in the realm of classical provability logic still hold in the intuitionistic case.

\begin{lemma}[Derivability conditions, \cite{VisserZoethout2019}]
\label{lem:HA_derivability}
  Let $\tau$ be a $\Sigma^0_1$ axiomatization of an arithmetical theory $T$ extending $\HA$, and $\varphi, \psi$ be formulas in the language of arithmetic. Then:
  \begin{enumerate}
    \item if $T \vdash \varphi$ then $\HA \vdash \Box_\tau \varphi$;
    \item $\HA \vdash \Box_\tau (\varphi \to \psi) \to (\Box_\tau \varphi \to \Box_\tau \psi)$;
    \item $\HA \vdash \Box_\tau \varphi \to \Box_\eta \Box_\tau \varphi$.
  \end{enumerate}
\end{lemma}

\begin{lemma}[Collection, \cite{FujiwaraKurahashi2020}, Proposition 5.13]
\label{lem:HAcollection}
  For any arithmetical formula $\varphi$ without $x$ as a free variable:
  \begin{equation*}
    \HA \vdash
    \Forall{y {<} x} \Exists{z} \varphi
      \to
      \Exists{w} \Forall{y {<} x} \Exists{z {<} w} \varphi
      .
  \end{equation*}
\end{lemma}

Note that, since $\HA$ proves full collection, $\Prf_\eta$ is equivalent to a $\Sigma^0_1$ formula, provably in $\HA$.

As in the classical case, we define $\Diamond_\tau \varphi$ as $\neg \Box_\tau \neg \varphi$ when $\tau$ is an axiomatization of an extension of $\HA$.

We extend a generic realization $\cdot\is$ to non-predicate formulas as in the classical case (cf.~Definition~\ref{def:isT}).
Note that $\varphi\isHA$ is equivalent to a $\Sigma_1$ formula and extends $\HA$, both of these provably in $\HA$.

\begin{theorem}[Arithmetical soundness w.r.t.~$\HA$]
\label{thm:HAsoundness}
  \begin{equation*}
    \QRC \subseteq
      \{\varphi \vdash \psi \mid
        \Forall{\cdot\is} \HA \vdash \Forall{\theta, \vec y, \vec z}
          (\Box_{\psi\isHA} \theta \to \Box_{\varphi\isHA} \theta)
      \}
      .
  \end{equation*}
\end{theorem}
\begin{proof}
  Let $\varphi$ and $\psi$ be formulas such that $\varphi \vdash \psi$. The proof is similar to the classical case, and proceeds by external induction on $\varphi \vdash \psi$.

  The soundness of $\varphi \vdash \top$ states that $\varphi\isHA$ extends $\HA$, which can readily be checked by induction on $\varphi$. The soundness of $\varphi \vdash \varphi$ and of the cut rule are straightforward, and the soundness of $\varphi \land \psi \vdash \varphi$ is a simple weakening.

  We proceed with the soundness of the conjunction introduction rule, that if $\varphi \vdash \psi$ and $\varphi \vdash \chi$ then $\varphi \vdash \psi \land \chi$. Reason in $\HA$ and let $\theta$ be a closed formula, and $\vec y$ and $\vec z$ be arbitrary. Assume $\Box_{\psi\isHA \lor \chi\isHA} \theta$, taking $\vec{\pi} = \pi_0, \hdots, \pi_{n-1}$ as a proof of this fact. Some of the $\pi_i$ are axioms of $\psi\isHA$, some are axioms of $\chi\isHA$, and some follow from previous steps in the proof through a rule. Let $\{\delta_i\}_{i \in I}$ be the finite set of $\psi\isHA$ axioms appearing in $\vec{\pi}$. Thus $\Box_{\psi\isHA} \bigwedge_{i \in I} \delta_i$ and by the induction hypothesis for $\varphi \vdash \psi$ we obtain $\Box_{\varphi\isHA} \bigwedge_{i \in I} \delta_i$. On the other hand, we know $\Box_{\chi\isHA} (\bigwedge_{i \in I} \delta_i \to \theta)$ by the deduction theorem. Through the induction hypothesis for $\varphi \vdash \chi$ we conclude $\Box_{\varphi\isHA} (\bigwedge_{i \in I} \delta_i \to \theta)$. Putting these two observations together, we obtain the desired $\Box_{\varphi\isHA} \theta$.

  We turn to the necessitation rule, that if $\varphi \vdash \psi$ then $\Diamond \varphi \vdash \Diamond \psi$. Reason in $\HA$ and let $\theta, \vec y$, and $\vec z$ be arbitrary. Consider the induction hypothesis with $\theta := \neg \top$ (and $\vec y$, $\vec z$ as given by our assumption): $\Box_{\psi\isHA} \neg \top \to \Box_{\varphi\isHA} \neg \top$. Taking the contrapositive, we conclude $\Diamond_{\varphi\isHA} \top \to \Diamond_{\psi\isHA} \top$, and consequently $\Box_\eta (\Diamond_{\varphi\isHA} \top \to \Diamond_{\psi\isHA} \top)$ by Lemma~\ref{lem:HA_derivability}.
  Assume now that $\Box_{(\Diamond \psi)\isHA} \theta$. By the deduction theorem, we obtain $\Box_\eta (\Diamond_{\psi\isHA} \top \to \theta)$. Thus our previous observation gives us $\Box_\eta (\Diamond_{\varphi\isHA} \top \to \theta)$, which is $\Box_{(\Diamond \varphi)\isHA} \theta$ by the deduction theorem.

  Consider the transitivity axiom: $\Diamond \Diamond \varphi \to \Diamond \varphi$. We start by observing that $(\Diamond \Diamond \varphi)\isHA$ is equivalent to $\eta(u) \lor u = \gnum{\Diamond_\eta \Diamond_{\varphi\isHA} \top}$. Note that we can derive $\Box_\eta (\Diamond_\eta \Diamond_{\varphi\isHA} \top \to \Diamond_{\varphi\isHA} \top)$ from Lemma~\ref{lem:HA_derivability}. Assume now $\Box_{(\Diamond \varphi)\isHA} \theta$. By the deduction theorem we have $\Box_\eta (\Diamond_{\varphi\isHA} \top \to \theta)$. Then we obtain $\Box_\eta (\Diamond_\eta \Diamond_{\varphi\isHA} \top \to \theta)$ by our previous observation, and we finish with one more application of the deduction theorem.

  The $\forall$ introduction rule on the right states that if $x_0 \notin \fv(\varphi)$ and $\varphi \vdash \psi$, then $\varphi \vdash \Forall{x_0} \psi$. Reason in $\HA$ and let $\theta, \vec y$, and $\vec z$ be arbitrary, where $y_0$ does not appear in $\vec y$. Assume $\Box_{\Exists{y_0} \psi\isHA} \theta$ and let $\vec{\pi} = \pi_0, \hdots \pi_{n-1}$ be a proof of this fact. Let $\{\delta\}_{i \in I}$ be the finite set of axioms of $\Exists{y_0} \psi\isHA$ appearing in $\vec{\pi}$. Note that $\Box_\eta (\Forall{i \in I} \delta_i \to \theta)$ holds by the deduction theorem.
  Then for each $i \in I$ there is $k_i$ such that $\delta_i$ is an axiom of $\psi\isHA\subst{y_0}{k_i}$, so in particular $\Box_{\psi\isHA\subst{y_0}{k_i}} \delta_i$. We can use the induction hypothesis for each $i \in I$ to conclude $\Box_{\varphi\isHA\subst{y_0}{k_i}} \delta_i$, and since $x_0 \not\in \fv(\varphi)$, we also know that $y_0 \not\in \fv(\varphi\isHA)$, and thus we obtain $\Forall{i \in I} \Box_{\varphi\isHA} \delta_i$. We now use collection to obtain $\Box_{\varphi\isHA} \Forall{i \in I} \delta_i$, and the result follows from our observation that $\Box_\eta (\Forall{i \in I} \delta_i \to \theta)$, noting that $\varphi\isHA$ extends $\HA$.

  The $\forall$ introduction on the left states that if $\varphi\subst{x_0}{t} \vdash \psi$, then $\Forall{x_0} \varphi \vdash \psi$. Let $w$ be the arithmetical counterpart of $t$ (so if $t$ is $x_k$ then $w := y_k$ and if $t$ is $c_k$ then $w := z_k$). We have $\varphi\subst{x_0}{t}\isHA = \varphi\isHA\subst{y_0}{w}$. Let $\theta, \vec y$ and $\vec z$ be arbitrary, where $y_0$ appears in $\vec y$ if and only if $x_0$ is a free variable of $\psi$. We assume $\Box_{\psi\isHA} \theta$. If $t$ appears in $\varphi$ or in $\psi$, then the value of $w$ was already fixed when we picked arbitrary $\vec y$ and $\vec z$. Otherwise, fix $w := 0$ and use the induction hypothesis to obtain $\Box_{\varphi\isHA\subst{y_0}{w}} \theta$. It is then clear that $\Box_{\Exists{y_0} \varphi\isHA} \theta$ holds as well.

  The term instantiation rule states that if $\varphi \vdash \psi$, then $\varphi\subst{x_0}{t} \vdash \psi\subst{x_0}{t}$. Let $w$ be the arithmetical counterpart of $t$. The induction hypothesis states that $\HA \vdash \Forall{\theta, y_0, \vec y, \vec z} (\Box_{\psi\isHA} \theta \to \Box_{\varphi\isHA} \theta)$ (assuming without loss of generality that $y_0$ is a free variable of either $\varphi$ or $\psi$). We wish to prove that $\HA \vdash \Forall{\theta, w, \vec y, \vec z} (\Box_{\psi\isHA\subst{y_0}{w}} \theta \to \Box_{\varphi\isHA\subst{y_0}{w}} \theta)$. This is a simple matter of renaming variables.

  The constant elimination rule states that if $c_0$ does not appear in either $\varphi$ nor $\psi$ and $\varphi\subst{x_0}{c_0} \vdash \psi\subst{x_0}{c_0}$, then $\varphi \vdash \psi$. The induction hypothesis states that $\HA \vdash \Forall{\theta, \vec y, z_0, \vec z} (\Box_{\psi\isHA\subst{y_0}{z_0}} \theta \to \Box_{\varphi\isHA\subst{y_0}{z_0}} \theta)$, where $y_0$ is not a part of $\vec y$. We can rename $z_0$ back to $y_0$ to obtain $\HA \vdash \Forall{\theta, y_0, \vec y, \vec z} (\Box_{\psi\isHA} \theta \to \Box_{\varphi\isHA} \theta)$, noting that $\varphi\isHA\subst{y_0}{z_0}\subst{z_0}{y_0}$ is just $\varphi\isHA$ because $z_0$ does not appear in $\varphi\isHA$ (and similarly for $\psi$). This concludes the proof.
\end{proof}

The arithmetical completeness of $\QRC$ with respect to $\HA$ remains an open question.
We conjecture that the fact that the substitutions in the completeness proofs are of restricted complexity, and the fact that $\PA$ is $\Pi^0_2$ conservative over $\HA$ (see \cite{Friedman1978}) can be essential ingredients in a possible completeness proof.

\section{Future work}
\label{sec:future}

There are many unexplored paths surrounding $\QRC$. It would be worthwhile to extend it to a polymodal language (in analogy with $\RC$, as in \cite{Beklemishev2012}), and to the positive setting (as in \cite{Dunn1995, CelaniJansana2012}). Whether this is possible without loosing decidability remains to be seen.
A hypothetical $\mathsf{QRC}_\Lambda$ would presumably lead to some interesting applications to $\Pi^0_1$ ordinal analysis and ordinal notation systems.

There are several proof theoretical properties of interest, such as interpolation, cut-free proofs, fixpoints, etc.
One could also strive for uniform completeness.

Moreover, the set of always true $\QRC$ sequents should be a productive avenue of study.

The completeness of $\QRC$ with respect to $\HA$ remains an open question.

Finally, the current approximation for the computational complexity of $\QRC$ is super-exponential space, since the canonical model grows quite large. A more dedicated study might lead to a significant reduction in complexity.




\begin{thebibliography}{40}
\expandafter\ifx\csname natexlab\endcsname\relax\def\natexlab#1{#1}\fi
\expandafter\ifx\csname url\endcsname\relax
  \def\url#1{{\tt #1}}\fi
\expandafter\ifx\csname urlprefix\endcsname\relax\def\urlprefix{URL }\fi

\bibitem[{{\noopsort{Almeida Borges}de Almeida Borges} \& Joosten(2020)}]{QRC1}
{\noopsort{Almeida Borges}de Almeida Borges}, A., \& Joosten, J.~J. (2020).
\newblock Quantified reflection calculus with one modality.
\newblock In N.~Olivetti, R.~Verbrugge, S.~Negri, \& G.~Sandu (Eds.) {\em
  Advances in Modal Logic 13\/}, (pp. 13--32). College Publications.

\bibitem[{Artemov(1985)}]{Artemov1985}
Artemov, S.~N. (1985).
\newblock Nonarithmeticity of truth predicate logics of provability.
\newblock {\em Doklady Akademii Nauk SSSR\/}, {\em 284\/}(2), 270--271.
\newblock In Russian. English translation in Soviet Mathematics Doklady
  33:403--405, 1985.

\bibitem[{Artemov(1986)}]{Artemov1986}
Artemov, S.~N. (1986).
\newblock Numerically correct logics of provability.
\newblock {\em Doklady Akademii Nauk SSSR\/}, {\em 290\/}(6), 1289--1292.
\newblock In Russian.

\bibitem[{Artemov \& Japaridze(1990)}]{ArtemovJaparidze1990}
Artemov, S.~N., \& Japaridze, G.~K. (1990).
\newblock Finite {K}ripke models and predicate logics of provability.
\newblock {\em Journal of Symbolic Logic\/}, {\em 55\/}(3), 1090--1098.

\bibitem[{Beklemishev(2006)}]{Beklemishev2006_WormPrinciple}
Beklemishev, L.~D. (2006).
\newblock The worm principle.
\newblock In Z.~Chatzidakis, P.~Koepke, \& W.~Pohlers (Eds.) {\em Logic
  Colloquium 2002, Lecture Notes in Logic 27\/}, (pp. 75--95). ASL
  Publications.

\bibitem[{Beklemishev(2012)}]{Beklemishev2012}
Beklemishev, L.~D. (2012).
\newblock Calibrating provability logic: From modal logic to {R}eflection
  {C}alculus.
\newblock In T.~Bolander, T.~Braüner, T.~S. Ghilardi, \& L.~Moss (Eds.) {\em
  Advances in Modal Logic 9\/}, (pp. 89--94). London: College Publications.

\bibitem[{Beklemishev(2014)}]{Beklemishev2014}
Beklemishev, L.~D. (2014).
\newblock Positive provability logic for uniform reflection principles.
\newblock {\em Annals of Pure and Applied Logic\/}, {\em 165\/}(1), 82--105.

\bibitem[{Beklemishev(2018)}]{Bek18b}
Beklemishev, L.~D. (2018).
\newblock A note on strictly positive logics and word rewriting systems.
\newblock In S.~Odintsov (Ed.) {\em Larisa Maximova on Implication,
  Interpolation, and Definability\/}, vol.~15, (pp. 61--70). Berlin,
  Heidelberg: Springer.

\bibitem[{Beklemishev \&
  Pakhomov(2019)}]{BeklemishevPakhomov:2019:GLPforTheoriesOfTruth}
Beklemishev, L.~D., \& Pakhomov, F. (2019).
\newblock Reflection algebras and conservation results for theories of iterated
  truth.
\newblock {\em arXiv:1908.10302 [math.LO]\/}.

\bibitem[{Berarducci(1989)}]{Berarducci1989}
Berarducci, A. (1989).
\newblock $\sigma^0_n$-interpretations of modal logic.
\newblock {\em Bollettino dell’Unione Matematica Italiana\/}, {\em 7\/}(3-A),
  177--184.

\bibitem[{Boolos(1993)}]{Boolos:1993:LogicOfProvability}
Boolos, G.~S. (1993).
\newblock {\em The {L}ogic of {P}rovability\/}.
\newblock Cambridge: Cambridge University Press.

\bibitem[{Boolos \& McGee(1987)}]{BoolosMcGee1987}
Boolos, G.~S., \& McGee, V.~R. (1987).
\newblock The degree of the set of sentences of predicate provability logic
  that are true under every interpretation.
\newblock {\em The Journal of Symbolic Logic\/}, {\em 52\/}, 165--171.

\bibitem[{Celani \& Jansana(2012)}]{CelaniJansana2012}
Celani, S., \& Jansana, R. (2012).
\newblock A note on the model theory for positive modal logic.
\newblock {\em Fundamenta Informaticae\/}, {\em 114\/}(1), 31--54.

\bibitem[{{Coq Development Team}(1989)}]{coq}
{Coq Development Team} (1989).
\newblock The {C}oq {P}roof {A}ssistant (1989--2021).
\newline\urlprefix\url{http://coq.inria.fr}

\bibitem[{Craig(1953)}]{Craig1953}
Craig, W. (1953).
\newblock On axiomatizability within a system.
\newblock {\em The Journal of Symbolic Logic\/}, {\em 18\/}, 30--32.

\bibitem[{Dashkov(2012)}]{Dashkov2012}
Dashkov, E.~V. (2012).
\newblock On the positive fragment of the polymodal provability logic {GLP}.
\newblock {\em Mathematical Notes\/}, {\em 91\/}(3-4), 318--333.

\bibitem[{Dunn(1995)}]{Dunn1995}
Dunn, J.~M. (1995).
\newblock Positive modal logic.
\newblock {\em Studia Logica\/}, {\em 55\/}, 301--317.

\bibitem[{Feferman(1960)}]{Feferman1960}
Feferman, S. (1960).
\newblock Arithmetization of metamathematics in a general setting.
\newblock {\em Fundamenta Mathematicae\/}, {\em 49\/}, 35--92.

\bibitem[{Fern\'andez-Duque(2017)}]{FernandezDuque2017}
Fern\'andez-Duque, D. (2017).
\newblock Worms and spiders: Reflection calculi and ordinal notation systems.
\newblock {\em {IfCoLoG} Journal of Logics and their Applications\/}, {\em
  4\/}(10), 3277--3356.

\bibitem[{Friedman(1978)}]{Friedman1978}
Friedman, H. (1978).
\newblock Classically and intuitionistically provably recursive functions.
\newblock In D.~S. Scott, \& G.~H. Muller (Eds.) {\em Higher Set Theory\/},
  vol. 699, (pp. 21--28). Springer Verlag.

\bibitem[{Fujiwara \& Kurahashi(2020)}]{FujiwaraKurahashi2020}
Fujiwara, M., \& Kurahashi, T. (2020).
\newblock Refining the arithmetical hierarchy of classical principles.
\newblock {arXiv:2010.11527 [math.LO]}.

\bibitem[{Gödel(1931)}]{Godel1931}
Gödel, K. (1931).
\newblock Über formal unentscheidbare {S}ätze der {P}rincipia {M}athematica
  und verwandter {S}ysteme {I}.
\newblock {\em Monatshefte für {M}athematik und {P}hysik\/}, {\em 38\/},
  173--198.

\bibitem[{H\'ajek \& Pudl\'ak(1993)}]{HajekPudlak:1993:Metamathematics}
H\'ajek, P., \& Pudl\'ak, P. (1993).
\newblock {\em Metamathematics of {F}irst {O}rder {A}rithmetic\/}.
\newblock Berlin, Heidelberg, New York: Springer-{V}erlag.

\bibitem[{Hao \& Tourlakis(2021)}]{HaoTourlakis2021}
Hao, Y., \& Tourlakis, G. (2021).
\newblock An arithmetically complete predicate modal logic.
\newblock {\em Bulletin of the Section of Logic\/}.
\newline\urlprefix\url{https://czasopisma.uni.lodz.pl/bulletin/article/view/8441}

\bibitem[{Hughes \& Cresswell(1996)}]{HughesCresswell1996}
Hughes, G.~E., \& Cresswell, M.~J. (1996).
\newblock {\em A New Introduction to Modal Logic\/}.
\newblock Routledge.

\bibitem[{Japaridze(1986)}]{Japaridze1986}
Japaridze, G.~K. (1986).
\newblock {\em The modal logical means of investigation of provability\/}.
\newblock Ph.D. thesis, Moscow State University, Moscow.
\newblock (In {R}ussian).

\bibitem[{Kikot et~al.(2019)Kikot, Kurucz, Tanaka, Wolter, \&
  Zakharyaschev}]{Kikot2019}
Kikot, S., Kurucz, A., Tanaka, Y., Wolter, F., \& Zakharyaschev, M. (2019).
\newblock Kripke completeness of strictly positive modal logics over
  meet-semilattices with operators.
\newblock {\em Journal of Symbolic Logic\/}, {\em 84\/}(2), 533--588.

\bibitem[{Kurahashi(2013)}]{Kurahashi2013}
Kurahashi, T. (2013).
\newblock On predicate provability logics and binumerations of fragments of
  {P}eano {A}rithmetic.
\newblock {\em Archive for Mathematical Logic\/}, {\em 52\/}, 871--880.

\bibitem[{Kurahashi(2021)}]{Kurahashi2021}
Kurahashi, T. (2021).
\newblock On inclusions between quantified provability logics.
\newblock {\em Studia Logica\/}.

\bibitem[{Kurucz et~al.(2010)Kurucz, Wolter, \& Zakharyaschev}]{Kurucz2010}
Kurucz, A., Wolter, F., \& Zakharyaschev, M. (2010).
\newblock Islands of tractability for relational constraints: towards dichotomy
  results for the description logic el.
\newblock In L.~D. Beklemishev, V.~Goranko, \& V.~Shehtman (Eds.) {\em Advances
  in Modal Logic 8\/}, (pp. 271--291). College Publications.

\bibitem[{McGee(1985)}]{McGee1985}
McGee, V.~R. (1985).
\newblock {\em Truth and Necessity in Partially Interpreted Languages\/}.
\newblock Ph.D. thesis, University of California, Berkeley.

\bibitem[{Montagna(1984)}]{Montagna1984}
Montagna, F. (1984).
\newblock The predicate modal logic of provability.
\newblock {\em Notre Dame Journal of Formal Logic\/}, {\em 25\/}(2), 179--189.

\bibitem[{Shapirovsky(2008)}]{Shapirovsky2008}
Shapirovsky, I. (2008).
\newblock {PSPACE}-decidability of {J}aparidze's polymodal logic.
\newblock In C.~Areces, \& R.~Goldblatt (Eds.) {\em Advances in Modal Logic
  7\/}, (pp. 289--304). College Publications.

\bibitem[{Solovay(1976)}]{Solovay:1976}
Solovay, R.~M. (1976).
\newblock Provability interpretations of modal logic.
\newblock {\em Israel Journal of Mathematics\/}, {\em 28\/}, 33--71.

\bibitem[{Tarski(1983)}]{Tarski1936}
Tarski, A. (1983).
\newblock {\em Logic, Semantics and Metamathematics\/}, chap. The concept of
  truth in formalized languages, (pp. 152--278).
\newblock Hackett.
\newblock English translation of Tarski’s 1936 Der Wahrheitsbegriff in den
  Formalisierten Sprachen.

\bibitem[{Vardanyan(1986)}]{Vardanyan1986}
Vardanyan, V.~A. (1986).
\newblock Arithmetic complexity of predicate logics of provability and their
  fragments.
\newblock {\em Doklady Akad. Nauk SSSR\/}, {\em 288\/}(1), 11--14.
\newblock In Russian. English translation in Soviet Mathematics Doklady 33,
  569--572 (1986).

\bibitem[{Vardanyan(1988)}]{Vardanyan1988}
Vardanyan, V.~A. (1988).
\newblock Bounds on the arithmetical complexity of predicate logics of
  provability.
\newblock In S.~N. Adyan (Ed.) {\em Questions of Cybernetics: Complexity of
  Computation and Applied Mathematical Logic\/}, vol. 134, (pp. 46--72).
  Academy of Sciences of the USSR.
\newblock In Russian.

\bibitem[{Visser \& de~Jonge(2006)}]{VisserAndDeJonge:2006:NoEscape}
Visser, A., \& de~Jonge, M. (2006).
\newblock No escape from {V}ardanyan's theorem.
\newblock {\em Archive for Mathematical Logic\/}, {\em 45\/}(5), 539--554.

\bibitem[{Visser \& Zoethout(2019)}]{VisserZoethout2019}
Visser, A., \& Zoethout, J. (2019).
\newblock Provability logic and the completeness principle.
\newblock {\em Annals of Pure and Applied Logic\/}, {\em 170\/}(6), 718--753.

\bibitem[{Yavorsky(2002)}]{Yavorsky2002}
Yavorsky, R.~E. (2002).
\newblock On arithmetical completeness of first-order logics of provability.
\newblock In F.~Wolter, H.~Wansing, M.~de~Rijke, \& M.~Zakharyaschev (Eds.)
  {\em Advances in Modal Logic 3\/}. World Scientific Publishing Co. Pte. Ltd.

\end{thebibliography}

\newcommand{\noopsort}[1]{}

\end{document}